\documentclass[12pt]{article}

\usepackage{fullpage}
\usepackage{amssymb}
\usepackage{amsmath}
\usepackage{amsthm}
\usepackage{url}
\usepackage{hyperref}
\usepackage{graphicx}
\usepackage{mathabx}
\usepackage{tablefootnote}

\newtheorem{theorem}{Theorem}
\newtheorem{corollary}{Corollary}

\newtheorem{lemma}{Lemma}

\newtheorem{definition}{Definition}
\DeclareMathOperator{\sign}{sign}

\newcommand{\R}{\mathbb{R}}
\newcommand{\cG}{{\cal{G}}}

\newcommand{\ijktuple}{(I, +_I, \preceq_I, J, +_J, \preceq_J, K, +_K, \preceq_K, \ast)}
\newcommand{\ituple}{(I, +, \preceq_I, \ast)}

\begin{document}

\title{On generalizations of Schur's inequality}

\date{August 31, 2020\\Latest update: July 7, 2021}
\author{Chai Wah Wu\\IBM T. J. Watson Research Center\\P. O. Box 218, Yorktown Heights, New York 10598, USA\\e-mail: chaiwahwu@ieee.org}
\maketitle

\begin{abstract}
Schur's inequality for the sum of products of the differences of real numbers states that for $x,y,z,t\geq 0$, $x^t(x-y)(x-z) + y^t(y-z)(y-x) + z^t(z-x)(z-y) \geq 0$. In this paper we study a generalization of this inequality to more terms, more general functions of the variables and algebraic structures such as vectors and Hermitian matrices.
\end{abstract}

\section{Introduction}
Issai Schur's classical inequality \cite{steele:maa2004} for the sum of products of the differences of real numbers\footnote{There are several inequalities attributed to Schur. For conciseness, in the sequel we refer to this inequality as simply Schur's inequality.} states that for $x,y,z,t\geq 0$
\begin{equation} \label{eqn:schur}
x^t(x-y)(x-z) + y^t(y-z)(y-x) + z^t(z-x)(z-y) \geq 0
\end{equation}

Because of the symmetry of Eq. (\ref{eqn:schur}), we can assume without loss of generality that $x\geq y\geq z\geq 0$. 
The purpose of this note is to consider further generalizations of Schur's inequality and extend it to more variables and other algebraic structures.

\section{Generalizations of Schur's inequality}
\begin{definition} 
A function $g:\R\rightarrow\R$
\begin{itemize}
\item is in class $\cG$ if $g$ is either even or odd, $g(0)\geq 0$, and $g$ is monotonically nondecreasing on $\R^+_0$.
\item is in class $\cG_2$ if $g$ is in class $\cG$ and for all $x\geq y\geq 0$ and $z\geq 0$,
\begin{eqnarray} 
g(x)g(y+z) &\geq& g(y)g(x+z)\label{eqn:condition1} \\
g(x)+g(y+z) &\leq& g(y)+g(x+z) \label{eqn:condition1b}
\end{eqnarray}
\end{itemize}\label{def:g}
\end{definition}
Note that Definition \ref{def:g} implies that if $g\in\cG$, then $g(x)\geq 0$ for $x\geq 0$.
A generalization of Schur's inequality is the following:
\begin{theorem} \label{thm:one}
Let $x\geq y \geq z$ and $a,c\geq 0$ such that $a+c \geq |b|$, and 
$g$ is a function in class $\cG$.
Then 
\begin{equation} \label{eqn:one}
ag(x-y)g(x-z) + bg(y-z)g(y-x) + cg(z-x)g(z-y) \geq 0
\end{equation} 
\end{theorem}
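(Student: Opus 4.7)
The plan is to reduce everything to nonnegative quantities using the parity hypothesis on $g$, then exploit monotonicity together with the condition $a+c\geq |b|$. Since $x\geq y\geq z$, the arguments $x-y$, $y-z$, $x-z$ are nonnegative, while $y-x$, $z-x$, $z-y$ are nonpositive. Set $A=g(x-y)$, $B=g(y-z)$, $C=g(x-z)$; the fact that $g\in\cG$ implies $g\geq 0$ on $\R^+_0$, so $A,B,C\geq 0$.

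The crucial observation is a monotonicity one: since $x-z = (x-y)+(y-z)\geq x-y\geq 0$ and similarly $x-z\geq y-z\geq 0$, the nondecreasing behavior of $g$ on $\R^+_0$ yields $C\geq A$ and $C\geq B$. I would next use the parity of $g$ to rewrite the two negative-argument evaluations. If $g$ is even, then $g(y-x)=A$, $g(z-x)=C$, $g(z-y)=B$, and the left-hand side of \eqref{eqn:one} becomes $aAC + bAB + cBC$. If $g$ is odd, the two sign flips in the last term cancel while the middle term picks up a sign, giving $aAC - bAB + cBC$. In either case the expression has the form $aAC + \varepsilon bAB + cBC$ with $\varepsilon\in\{+1,-1\}$.

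When $\varepsilon b\geq 0$ the inequality is immediate since $a,c,A,B,C\geq 0$. Otherwise $\varepsilon b=-|b|$ and I need to prove $aAC+cBC\geq |b|AB$. Here the monotonicity estimates do the work: $C\geq B$ gives $aAC\geq aAB$ and $C\geq A$ gives $cBC\geq cAB$, so adding yields
\[
aAC+cBC\;\geq\;(a+c)AB\;\geq\;|b|AB,
\]
using the hypothesis $a+c\geq |b|$. This closes both cases and completes the argument.

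There is not really a hard step here; the one point worth flagging is that the proof uses only the class $\cG$ properties (parity, nonnegativity at $0$, monotonicity on $\R^+_0$) and not the stronger multiplicative/additive conditions \eqref{eqn:condition1}, \eqref{eqn:condition1b} defining $\cG_2$, which confirms the theorem is correctly stated under the weaker hypothesis. The only genuine subtlety is making sure the even and odd cases collapse to the same inequality after absorbing the signs, so that the bound $a+c\geq |b|$ rather than a one-sided bound on $b$ suffices.
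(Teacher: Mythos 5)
Your argument is correct and is essentially the paper's own proof in different notation: with $r=x-y$, $s=y-z$, your $A,B,C$ are $g(r),g(s),g(r+s)$, your sign $\varepsilon$ is the paper's $\alpha$, and the final estimate $aAC+cBC\geq(a+c)AB\geq|b|AB$ is exactly the monotonicity bound the paper uses to reach $(a+c-|b|)g(r)g(s)\geq 0$. No substantive difference.
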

\begin{proof}
Let $r = x-y\geq 0$ and $s = y-z\geq 0$.
Then $ag(x-y)g(x-z) + bg(y-z)g(y-x) + cg(z-x)g(z-y) = ag(r)g(r+s)+bg(-r)g(s) + cg(-s-r)g(-s) = ag(r)g(r+s)+\alpha bg(r)g(s) + cg(s+r)g(s)$ where $\alpha = 1$ if $g$ is an even function and $\alpha = -1$ if $g$ is an odd function. 
The monotonicity of $g$ implies that this is larger than or equal to 
$ag(r)g(s) - |b|g(r)g(s) + cg(r)g(s) = (a+c-|b|)g(r)g(s)$ and the conclusion then follows.
\end{proof}

Schur's inequality considers products of the nontrivial differences between 3 variables $x$, $y$ and $z$. It is interesting to note that Schur's inequality can be considered as a consequence of the $2$ variable case, providing an alternative perspective of Schur's inequality. The 2 variable version of Theorem \ref{thm:one} is:

\begin{theorem}\label{thm:2vars}
Let $x\geq y$, $a\geq |b|$ and $g$ be a function in class $\cG$. Then
$ ag(x-y)+bg(y-x)\geq 0$.
\end{theorem}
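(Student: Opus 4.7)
The plan is to reduce the statement to a single nonnegative quantity by exploiting the parity of $g$, exactly as in the proof of Theorem \ref{thm:one} but in the simpler two-variable setting. First I would set $r = x - y$, which is nonnegative by hypothesis, and rewrite the left-hand side as $a g(r) + b g(-r)$.

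Next I would split into the two cases from the definition of class $\cG$. If $g$ is even, then $g(-r) = g(r)$, so the expression equals $(a+b)g(r)$; if $g$ is odd, then $g(-r) = -g(r)$, so it equals $(a-b)g(r)$. Writing $\alpha = 1$ in the even case and $\alpha = -1$ in the odd case, both cases collapse to $(a + \alpha b)g(r)$.

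Finally I would invoke the hypotheses: since $\alpha b \geq -|b|$, the coefficient satisfies $a + \alpha b \geq a - |b| \geq 0$, and since $g \in \cG$ is nondecreasing on $\R^+_0$ with $g(0) \geq 0$, we have $g(r) \geq 0$. Multiplying these yields the desired inequality.

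There is essentially no obstacle: the argument is a direct specialization of the technique already used to prove Theorem \ref{thm:one}, and the only thing to be careful about is handling the two parity cases uniformly via the sign $\alpha$ and correctly deducing $g(r) \geq 0$ from the note immediately following Definition \ref{def:g}.
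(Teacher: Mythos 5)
Your proof is correct and matches the paper's (very brief) argument, which simply observes that $g(x-y) = |g(y-x)| \geq 0$ so the sum is at least $(a-|b|)\,g(x-y) \geq 0$; your case split on parity via $\alpha$ is just a slightly more explicit rendering of the same idea.
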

Theorem \ref{thm:2vars} is trivially true since $g(x-y) = |g(y-x)|$. 
Let us now prove Theorem \ref{thm:one} using
Theorem \ref{thm:2vars}. 
If $c\geq |b|$, then $cg(z-x)g(z-y) = cg(y-z)g(x-z) \geq |bg(y-z)g(x-z)| \geq bg(y-z)g(y-x)$ and thus Eq. (\ref{eqn:one}) is satisfied.

Let $\tilde{a} = ag(x-z)\geq 0$, $\tilde{b}=bg(y-z)$ and $\tilde{c} = cg(y-z)\geq 0$. 
Suppose $|b|\geq c$. This implies that $|\tilde{b}|\geq \tilde{c}$.
$a+c\geq |b|$ implies that $\tilde{a}+\tilde{c}\geq |\tilde{b}|$.
Then
 $ag(x-y)g(x-z) + bg(y-z)g(y-x) + cg(z-x)g(z-y) = \tilde{a}g(x-y) + (\tilde{b}-\alpha\tilde{c})g(y-x) + c(g(z-x)g(z-y) + \alpha g(y-x)g(y-z))$, where $\alpha = \sign(b)$. 
 Since $c\geq 0$ and  $g(z-x)g(z-y)
\geq |g(y-x)g(y-z)|$, we see that the last term is nonnegative. Furthermore $|\tilde{b}-\alpha\tilde{c}| \leq |\tilde{b}|-\tilde{c} \leq \tilde{a}$ and the conclusion of Theorem \ref{thm:one} can be obtained by applying Theorem \ref{thm:2vars} to $\tilde{a}g(x-y) + (\tilde{b}-\alpha\tilde{c})g(y-x)$.

Even though this derivation of Theorem \ref{thm:one} from Theorem \ref{thm:2vars} is longer than the direct proof of Theorem \ref{thm:one} above, it is instructive as this observation of deducing the $(2n+1)$-variable case from the $2n$-variable case will be useful later on. 

\subsection{Extension of Schur's inequality to 4 variables}
 
\begin{theorem} \label{thm:gc}
Let $x_1\geq x_2\geq x_3\geq x_4$ be such that $x_1+x_4\geq x_2+x_3$. If $a_1 \geq \max(|a_2|,|a_4|)$,  $a_3 \geq |a_4|$ and $g$ is a function in $\cG$, then
\begin{align}
\sum_{i=1}^4a_i\prod_{i\neq j}g(x_i-x_j) \geq 0 \label{eqn:gc}
\end{align}
\end{theorem}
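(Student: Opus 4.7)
The plan is to reduce the desired inequality to a worst-case configuration in the sign-free coefficients $a_2, a_4$, and then to decompose the resulting bound into a sum of manifestly nonnegative pieces. First I would set $r_i = x_i - x_{i+1} \geq 0$ so that the hypothesis $x_1+x_4 \geq x_2+x_3$ translates to $r_1 \geq r_3$, and write $P_i = \prod_{j\neq i} g(x_i-x_j)$. Using that $g$ is even or odd, each $P_i$ splits as $\varepsilon_i |P_i|$ with $\varepsilon_1=\varepsilon_3=1$ and $\varepsilon_2=\varepsilon_4=\alpha$, where $\alpha=\pm 1$ is the parity of $g$; the magnitudes $|P_i|$ are explicit products of $g$ evaluated at the nonnegative quantities among $r_1, r_2, r_3, r_1+r_2, r_2+r_3, r_1+r_2+r_3$.

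Because the sum in (\ref{eqn:gc}) is linear in $a_2$ and $a_4$ and their admissible ranges are $|a_2|\leq a_1$ and $|a_4|\leq \min(a_1,a_3)$, I would bound it below by the worst-case value
\[
a_1|P_1| - a_1|P_2| + a_3|P_3| - m|P_4|,\qquad m := \min(a_1,a_3),
\]
and then rearrange this expression as
\[
(a_1-m)(|P_1|-|P_2|) + (a_3-m)|P_3| + m\bigl(|P_1|+|P_3|-|P_2|-|P_4|\bigr).
\]
Monotonicity of $g$ gives $|P_1|\geq|P_2|$ immediately, and $a_1,a_3\geq m$ by the definition of $m$, so the first two terms are nonnegative. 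The theorem therefore reduces to the cross inequality $|P_1|+|P_3|\geq|P_2|+|P_4|$.

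This cross inequality is the main obstacle, and it is the only step that invokes the hypothesis $r_1\geq r_3$. I plan to establish it by a direct algebraic manipulation: the difference factors as $g(r_1)M - g(r_3)N$, where
\[
M = g(r_1+r_2)g(r_1+r_2+r_3) - g(r_2)g(r_2+r_3),\quad N = g(r_1+r_2+r_3)g(r_2+r_3) - g(r_1+r_2)g(r_2).
\]
Monotonicity implies $M,N\geq 0$, and the further identity $M-N = \bigl(g(r_1+r_2)-g(r_2+r_3)\bigr)\bigl(g(r_2)+g(r_1+r_2+r_3)\bigr)\geq 0$ (again via $r_1\geq r_3$) yields $M\geq N$. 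Combined with $g(r_1)\geq g(r_3)\geq 0$, this gives $g(r_1)M \geq g(r_3)N$, completing the argument.
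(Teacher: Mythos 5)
Your proof is correct, but it takes a genuinely different route from the paper's. You first push the coefficients to their extreme values ($|a_2|\to a_1$, $|a_4|\to m=\min(a_1,a_3)$) and then decompose the worst case as $(a_1-m)(|P_1|-|P_2|)+(a_3-m)|P_3|+m(|P_1|+|P_3|-|P_2|-|P_4|)$, so that everything rests on the coefficient-free cross inequality $|P_1|+|P_3|\geq|P_2|+|P_4|$ (essentially the theorem with all $a_i=1$, which by symmetry is the extremal case); your factorization $|P_1|+|P_3|-|P_2|-|P_4|=g(r_1)M-g(r_3)N$ with $M\geq N\geq 0$ is a clean, self-contained identity, and I verified that $M-N=\bigl(g(r_1+r_2)-g(r_2+r_3)\bigr)\bigl(g(r_2)+g(r_1+r_2+r_3)\bigr)$ does hold and correctly isolates the hypothesis $r_1\geq r_3$. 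The paper instead keeps the coefficients throughout and pairs the terms as $(1,4)$ and $(2,3)$, factoring out $g(x_1-x_4)$ and $g(x_2-x_3)$ to get $\beta\geq g(x_1-x_4)w_1+g(x_2-x_3)w_2$ with $w_1\geq 0$ always; when $w_2<0$ it uses $g(x_1-x_4)\geq g(x_2-x_3)$ to reduce to $w_1+w_2\geq 0$, which it verifies by regrouping along $g(x_1-x_2)$ and $g(x_3-x_4)$. Your version buys a more modular statement (a reduction of the weighted inequality to the unweighted one plus the trivial bound $|P_1|\geq|P_2|$), at the cost of the minimum $m=\min(a_1,a_3)$, which requires the coefficients to live in a lattice; the paper's pairing avoids taking minima of coefficients and therefore transfers more directly to the partially ordered ring setting of Theorem \ref{thm:gcC}, where $\min(a_1,a_3)$ need not exist.
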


\begin{proof}
The left hand side of Eq. (\ref{eqn:gc}), denoted as $\beta$, can be written as
\begin{align*} \beta = & a_1g(x_1-x_4)g(x_1-x_2)g(x_1-x_3)+a_4g(x_4-x_1)g(x_4-x_3)g(x_4-x_2) \\
+ & a_2g(x_2-x_3)g(x_2-x_1)g(x_2-x_4) + a_3g(x_3-x_2)g(x_3-x_1)g(x_3-x_4)
\end{align*}
Using the fact that for $x\geq 0$, $|g(-x)| = g(x)$, we can bound this by:
\begin{align*} 
\beta \geq &  g(x_1-x_4)(a_1g(x_1-x_2)g(x_1-x_3)-|a_4|g(x_3-x_4)g(x_2-x_4)) \\
+ & g(x_2-x_3)(a_3g(x_1-x_3)g(x_3-x_4) -|a_2|g(x_1-x_2)g(x_2-x_4))\\
= & g(x_1-x_4)w_1+g(x_2-x_3)w_2
\end{align*}
where $w_1 = a_1g(x_1-x_2)g(x_1-x_3)-|a_4|g(x_3-x_4)g(x_2-x_4)$ and $w_2 = a_3g(x_1-x_3)g(x_3-x_4) -|a_2|g(x_1-x_2)g(x_2-x_4)$.
The hypothesis implies that $x_1-x_2\geq x_3-x_4$  Adding $x_2-x_3$ to both sides implies $x_1-x_3\geq x_2-x_4$. Thus $w_1\geq 0$.
If $w_2\geq 0$, then $\beta \geq 0$. Suppose $w_2 < 0$. Since $g(x_1-x_4)\geq g(x_2-x_3)$, this implies $g(x_1-x_4)w_2\leq g(x_2-x_3)w_2$, which means that
 $\beta \geq g(x_1-x_4)(w_1+w_2)$.  Next $w_1+w_2$ can be written as:
\begin{align*}
w_1+w_2 =&  g(x_1-x_2)(a_1g(x_1-x_3)-|a_2|g(x_2-x_4))  \\
+ &g(x_3-x_4)(a_3g(x_1-x_3)-|a_4|g(x_2-x_4))
\end{align*}
which is nonnegative since $a_1\geq |a_2|$, $a_3\geq |a_4|$ and $x_1-x_3\geq x_2-x_4$.
\end{proof}

Just as Theorem \ref{thm:one} can be deduced from Theorem \ref{thm:2vars},
a Corollary of Theorem \ref{thm:gc} is a generalization to 5 variables.

\begin{corollary} \label{cor:gc5}
Let $(x_i)_{i=1}^5$ be nonincreasing such that $x_1+x_4\geq x_2+x_3$. If $a_1 \geq \max(|a_2|,|a_4|-a_5)$, $a_3,a_5\geq 0$, $a_3+a_5 \geq |a_4|$, and $g$ is a function in $\cG$, then
\begin{align}
\sum_{i=1}^5a_i\prod_{i\neq j}g(x_i-x_j) \geq 0 \label{eqn:gc5}
\end{align}
\end{corollary}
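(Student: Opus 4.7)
The plan is to deduce the 5-variable Corollary \ref{cor:gc5} from the 4-variable Theorem \ref{thm:gc}, mimicking the reduction by which Theorem \ref{thm:one} was earlier obtained from Theorem \ref{thm:2vars}. Write $t_i = \prod_{j\neq i} g(x_i-x_j)$ for the five terms. The key observation is that for $i\leq 4$ the factor $g(x_i-x_5)$ can be pulled out of $t_i$, leaving $\tilde t_i = \prod_{j\neq i,\,j\leq 4} g(x_i-x_j)$; thus $a_i t_i = \tilde a_i \tilde t_i$ with $\tilde a_i = a_i g(x_i-x_5)$, so the first four terms of the 5-variable sum form a 4-variable Schur-type expression over $x_1,\ldots,x_4$.

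Two elementary facts I would record first, via the parity of $g$ and monotonicity of $|g|$ on $\R^+_0$: $t_5 \geq 0$, and $|t_5| \geq |t_4|$ (using $x_j-x_5 \geq x_j-x_4 \geq 0$ for $j\leq 3$, together with the fact that the $g(x_4-x_5)$ factor common to both $|t_4|$ and $|t_5|$ is nonnegative). I would then introduce a trimmed coefficient $a_4' = \sign(a_4)\max(|a_4|-a_5,\,0)$, so that $|a_4'| = \max(|a_4|-a_5,\,0)$ and $|a_4-a_4'| = \min(|a_4|,a_5) \leq a_5$. The hypotheses $a_1 \geq |a_4|-a_5$ and $a_3+a_5 \geq |a_4|$, together with $a_3\geq 0$ and $a_1\geq|a_2|\geq 0$, give $a_1 \geq |a_4'|$ and $a_3 \geq |a_4'|$; after multiplying by the appropriate $g(x_i-x_5)$ factors and invoking monotonicity, these translate into $\tilde a_1 \geq \max(|\tilde a_2|,\,|a_4'|\,g(x_4-x_5))$ and $\tilde a_3 \geq |a_4'|\,g(x_4-x_5)$.

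The core of the proof is then to apply Theorem \ref{thm:gc} to the 4-variable expression with coefficients $(\tilde a_1,\tilde a_2,\tilde a_3,\,a_4'\,g(x_4-x_5))$ over $x_1,\ldots,x_4$, whose hypotheses were just verified (the condition $x_1+x_4\geq x_2+x_3$ is in the Corollary). This yields $a_1 t_1 + a_2 t_2 + a_3 t_3 + a_4' t_4 \geq 0$. Subtracting from the full 5-variable sum leaves the residual $(a_4-a_4')t_4 + a_5 t_5$, which is bounded below by $-|a_4-a_4'|\,|t_4| + a_5|t_5| \geq -a_5|t_4|+a_5|t_5| = a_5(|t_5|-|t_4|) \geq 0$, completing the proof.

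The main obstacle is choosing the right trimming of $a_4$: taking $a_4'$ to absorb exactly $a_5$ worth of $|a_4|$ is what synchronizes the hypotheses "$a_3+a_5\geq|a_4|$" and "$a_1\geq|a_4|-a_5$" with the 4-variable conditions of Theorem \ref{thm:gc}, while simultaneously leaving a residual small enough that the bound $a_5(|t_5|-|t_4|) \geq 0$ suffices. Everything else is a monotonicity/parity bookkeeping exercise.
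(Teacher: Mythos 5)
Your proof is correct and takes essentially the same route as the paper: factor $g(x_i-x_5)$ out of the first four terms, divert $\min(|a_4|,a_5)$ of the fourth coefficient's weight into the nonnegative combination of the $t_4$ and $t_5$ terms (using $|t_5|\geq|t_4|$), and apply Theorem \ref{thm:gc} to the remaining 4-variable expression. The only cosmetic difference is that your trimmed coefficient $a_4'=\sign(a_4)\max(|a_4|-a_5,0)$ handles both cases at once, where the paper first reduces without loss of generality to $|a_4|\geq a_5$.
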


\begin{proof}
If $a_5 \geq |a_4|$, then $a_5\prod_{j<5}g(x_5-x_j) = a_5\prod_{j<5}g(x_j-x_5)\geq 0$ can be written as the sum of $2$ nonnegative numbers $|a_4|\prod_{j<5}g(x_5-x_j) + (a_5-|a_4|)\prod_{j<5}g(x_5-x_j)$, so subtracting $ (a_5-|a_4|)\prod_{j<5}g(x_5-x_j)$ from the left hand side of Eq. (\ref{eqn:gc5}) can only decrease it. After subtraction, the coefficient $a_5$ will be equal to $|a_4|$ and will still satisfy the conditions in the hypothesis and $|a_4|\geq a_5$. Therefore without loss of generality we can assume that
$|a_4|\geq a_5\geq 0$.

Let use define the variables $\tilde{a}_i = a_ig(x_i-x_5)$ for $1\leq i\leq 4$ and $\tilde{a}_5 = a_5g(x_4-x_5)$. Then the hypothesis implies that
$\tilde{a}_1 \geq \max(|\tilde{a}_2|,|\tilde{a}_4|-\tilde{a}_5), \tilde{a}_3 \geq |\tilde{a}_4|-\tilde{a}_5\geq 0$. 
The left hand side of Eq. (\ref{eqn:gc5}) can be rewritten as
\begin{align}&\sum_{i=1}^3\tilde{a}_i\prod_{i\neq j,j<5}g(x_i-x_j)   + (\tilde{a}_4-\alpha\tilde{a}_5)\prod_{j=1}^3g(x_4-x_j)\nonumber\\&+a_5\left(\prod_{j\neq 5}g(x_5-x_j)+\alpha\prod_{j\neq 4}g(x_4-x_j)\right)\label{eqn:lhsgc5}
\end{align}
where $\alpha = \sign(a_4)$. Since  $a_5\geq 0$ and $\prod_{j\neq 5}g(x_5-x_j)\geq |\prod_{j\neq 4}g(x_4-x_j)|$, the last term in Eq. (\ref{eqn:lhsgc5}) is nonnegative.
By applying Theorem \ref{thm:gc} to $\sum_{i=1}^3\tilde{a}_i\prod_{i\neq j,j<5}g(x_i-x_j)   + (\tilde{a}_4-\alpha\tilde{a}_5)\prod_{j=1}^3g(x_4-x_j)$ the proof is complete.
\end{proof}

The procedure in the proof of Corollary \ref{cor:gc5} shows that Schur's inequalities of $2n$ variables can be used to derive Schur's inequality of $2n+1$ variables.

\subsection{Extension to 6 and 7 variables} \label{sec:67}
In this section we present Schur's inequalities for 6 and 7 variables. The following results give properties of functions in class $\cG_2$.

\begin{lemma}
If $f$ and $g$ are functions in class $\cG_2$, then so is the function $h$ defined by $h(x) = f(x)g(x)$.
If $g:\R\rightarrow\R$ is a function in class $\cG$ such that
$\frac{d^{2}\log(g)}{dx^2}\leq 0$ and $\frac{d^2g}{dx^2} \geq 0$ for $x\geq 0$, then $g$ is in class $\cG_2$.\label{lem:one}
\end{lemma}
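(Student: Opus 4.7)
The plan is to handle the two claims separately. For the first claim, I would verify each of the three defining conditions of $\cG_2$ for $h=fg$.

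First, $h\in\cG$: parity is immediate since the product of two functions that are each even or odd is itself even or odd; $h(0)=f(0)g(0)\ge 0$; and $h$ is nondecreasing on $\R^+_0$ because on $\R^+_0$ both factors are nonnegative and nondecreasing. Next, for condition (\ref{eqn:condition1}) applied to $h$, I would multiply the corresponding multiplicative inequalities for $f$ and for $g$, which is valid because $f(x),g(x),f(y+z),g(y+z),f(y),g(y),f(x+z),g(x+z)$ are all nonnegative. The key step is condition (\ref{eqn:condition1b}). I would rewrite
\[
h(x+z)-h(x)=g(x+z)\bigl[f(x+z)-f(x)\bigr]+f(x)\bigl[g(x+z)-g(x)\bigr],
\]
view this as a function of $x$ with $z$ fixed, and show it is nondecreasing in $x$ on $\R^+_0$. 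In the first summand, as $x$ increases, $g(x+z)$ is nondecreasing and nonnegative (monotonicity of $g$) while $f(x+z)-f(x)$ is nonnegative and nondecreasing (by $f$'s monotonicity and by (\ref{eqn:condition1b}) applied to $f$); an analogous argument works for the second summand with the roles of $f$ and $g$ swapped. Rearranging $h(x+z)-h(x)\ge h(y+z)-h(y)$ gives (\ref{eqn:condition1b}) for $h$.

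For the second claim, I would translate the two differential hypotheses into the two inequalities directly. Convexity of $g$ on $\R^+_0$ (from $g''\ge 0$) implies that for fixed $z\ge 0$ the map $a\mapsto g(a+z)-g(a)$ is nondecreasing in $a$, so $g(y+z)-g(y)\le g(x+z)-g(x)$ when $x\ge y\ge 0$, which is exactly (\ref{eqn:condition1b}). Concavity of $\log g$ (from $(\log g)''\le 0$) implies dually that $a\mapsto \log g(a+z)-\log g(a)$ is nonincreasing, hence $\log g(x)+\log g(y+z)\ge\log g(y)+\log g(x+z)$; exponentiating gives (\ref{eqn:condition1}). Boundary issues where $g$ may vanish can be sidestepped by first restricting to points where $g>0$ and then passing to the limit using continuity of $g$.

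The only real obstacle is condition (\ref{eqn:condition1b}) in the product case, since the additive inequality is not preserved under products in any direct sense; the trick is noticing that the telescoping decomposition above turns the problem into two products of nonnegative nondecreasing quantities, so the multiplicative condition (\ref{eqn:condition1}) is never actually needed for closure under multiplication.
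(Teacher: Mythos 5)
Your proposal is correct and follows essentially the same route as the paper: closure under products is handled by a product-rule/telescoping decomposition of the increment combined with nonnegativity, monotonicity, and condition (\ref{eqn:condition1b}) for each factor (the multiplicative condition (\ref{eqn:condition1}) for $h$ being immediate by multiplying the two nonnegative inequalities), and the differential criteria are converted to (\ref{eqn:condition1}) and (\ref{eqn:condition1b}) via monotonicity of $(\log g)'$ and $g'$. The only cosmetic difference is that you show $x\mapsto h(x+z)-h(x)$ is nondecreasing, whereas the paper chains $h(x+z)-h(y+z)\geq\cdots\geq h(x)-h(y)$ directly; these are the same argument.
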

\begin{proof}
$h$ satisfying Eq. (\ref{eqn:condition1}) is trivially true.
To show that $h$ satisfies Eq. (\ref{eqn:condition1b}) for $x\geq y \geq 0$ and $z\geq 0$,
\begin{align*}
h(x+z)-h(y+z) & = f(x+z)g(x+z)-f(y+z)g(y+z) \\ &= f(x+z)(g(x+z)-g(y+z)) + (f(x+z)-f(y+z))g(y+z) \\ 
& \geq f(x+z)(g(x)-g(y)) + (f(x)-f(y))g(y+z) \\ &\geq f(x)(g(x)-g(y)) + (f(x)-f(y))g(y) \\ &= f(x)g(x)-f(y)g(y) = h(x)-h(y)
\end{align*} 
Let $f(x) = \log(g(x))$. $\frac{d^2f}{dx^2} \leq 0$ implies that $f'(x)$ is monotonically nonincreasing and
$$f(x+z) - f(x) = \int_{x}^{x+z} f'(s)ds \leq \int_{y}^{y+z} f'(s)ds =  f(y+z)-f(y)$$
 and  $f(x)-f(y) \geq f(x+z)-f(y+z)$.
This is equivalent to $\frac{g(x)}{g(y)} \geq \frac{g(x+z)}{g(y+z)}$ and thus implying Eq. (\ref{eqn:condition1}). Similarly if $\frac{d^2g}{dx^2} \geq 0$, then $g$ satisfies Eq. (\ref{eqn:condition1b}) as $g'$ is monotonically nondecreasing.
\end{proof}

\begin{lemma}\label{lem:lemma3}
If $f$ is a function in class $\cG_2$ and $x\geq y\geq 0$, $z\geq w\geq 0$, $t\geq u\geq 0$, 
then $f(x+t)f(z+t) - f(x)f(z) \geq f(y+u)f(w+u) - f(y)f(w)$.
\end{lemma}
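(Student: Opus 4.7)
The plan is to show that the expression $F(a,b,c) := f(a+c)f(b+c) - f(a)f(b)$ is nondecreasing in each of $a$, $b$, $c$ when all three variables are nonnegative, and then chain three one-variable monotonicity steps from $(x,z,t)$ down to $(y,w,u)$.

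For monotonicity in the first argument, fix $z,t\geq 0$ and take $x\geq y\geq 0$. The desired inequality $f(x+t)f(z+t) - f(x)f(z) \geq f(y+t)f(z+t) - f(y)f(z)$ rearranges to
\[
\bigl(f(x+t)-f(y+t)\bigr)f(z+t) \;\geq\; \bigl(f(x)-f(y)\bigr)f(z).
\]
Here condition (\ref{eqn:condition1b}) of $\cG_2$ (applied to the pair $x\geq y\geq 0$ with shift $t$) gives $f(x+t)-f(y+t)\geq f(x)-f(y)$, and this quantity is itself $\geq 0$ by monotonicity of $f$ on $\R_0^+$; similarly $f(z+t)\geq f(z)\geq 0$. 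A product of two nonnegative numbers dominates the product of two smaller nonnegative numbers, so the inequality follows. Monotonicity in the second argument (fix $x,t$; take $z\geq w\geq 0$) is identical by symmetry.

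For monotonicity in the third argument, fix $x,z\geq 0$ and take $t\geq u\geq 0$. We need $f(x+t)f(z+t)\geq f(x+u)f(z+u)$, which is immediate from monotonicity of $f$ on $\R_0^+$ since $x+t\geq x+u\geq 0$ and $z+t\geq z+u\geq 0$ give nonnegative factorwise domination.

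Combining the three, the chain
\[
f(x+t)f(z+t)-f(x)f(z) \;\geq\; f(y+t)f(z+t)-f(y)f(z) \;\geq\; f(y+t)f(w+t)-f(y)f(w) \;\geq\; f(y+u)f(w+u)-f(y)f(w)
\]
delivers the lemma. I do not foresee a real obstacle here: the proof uses only condition (\ref{eqn:condition1b}) together with monotonicity and nonnegativity of $f$ on $\R_0^+$, and condition (\ref{eqn:condition1}) is not needed. The only mild subtlety is being careful that both factors in each product comparison are nonnegative so that inequalities multiply in the right direction, which is ensured by the assumption that all of $y,w,u$ are $\geq 0$.
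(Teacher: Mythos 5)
Your proof is correct and uses the same ingredients as the paper's: condition (\ref{eqn:condition1b}) together with monotonicity and nonnegativity of $f$ on $\R^+_0$, applied one variable at a time. The paper merely organizes the steps differently (it lowers $t$ to $u$ first and then handles $x\to y$ and $z\to w$ via a single add-and-subtract telescoping identity), so this is essentially the same argument.
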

\begin{proof}
\begin{align*} 
&f(x+t)f(z+t)  - f(y+u)f(w+u) \\ &\geq f(x+u)f(z+u)  - f(y+u)f(w+u) \\
&= f(x+u)(f(z+u)-f(w+u))  + (f(x+u)-f(y+u))f(w+u) 
\\ & \geq f(x+u)(f(z)-f(w)) + (f(x)-f(y))f(w+u) \\ & \geq f(x)(f(z)-f(w)) + (f(x)-f(y))f(w)  \\ & =  f(x)f(z)-f(y)f(w)
\end{align*}
\end{proof}

Note that for $g(x)\neq 0$, $\frac{d^{2}\log(g)}{dx^2}\leq 0$ is equivalent to $gg'' \leq (g')^2$. Eq. (\ref{eqn:condition1}) implies that $g(x)^2 \geq g(x-z)g(x+z)$, i.e.  for $f= \log g$,
$f(x) \geq \frac{f(x-z)+f(x+z)}{2}$.

\begin{corollary} \label{cor:one}
The following functions and their pointwise products are in class $\cG_2$:
\begin{enumerate}
\item $g(x) = a$ for $a\geq 0$.
\item $g(x) = \sign(x) \stackrel{\makebox[0pt]{\mbox{\normalfont\tiny def}}}{=} \left\{ \begin{array}{ll} 1 & x > 0 \\ 0 & x = 0 \\ -1 & x < 0\end{array}\right.$.
\item $g(x) = |x|^s$ for $s\geq 1$.
\item $g(x) = e^{|x|}$.
\end{enumerate}
\end{corollary}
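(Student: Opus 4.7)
The plan is to reduce to verifying each of the four listed functions individually and then invoke the first part of Lemma \ref{lem:one}, which says that $\cG_2$ is closed under pointwise products, to obtain all products at once. The four building blocks split into two groups: items (1), (3), (4) are smooth on $(0,\infty)$ and can be handled by the second part of Lemma \ref{lem:one}, while item (2), $\sign$, is not smooth at $0$ and requires a direct case analysis.

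For the three smooth building blocks, I would check membership in $\cG$ (even/odd symmetry, $g(0)\geq 0$, nondecreasing on $\R^+_0$) and then verify the two second-derivative hypotheses on $\R^+$. The constant $g(x)=a\geq 0$ is immediate: it is even, $g(0)=a\geq 0$, is nondecreasing, and both inequalities (\ref{eqn:condition1}) and (\ref{eqn:condition1b}) collapse to equalities. For $g(x)=|x|^s$ with $s\geq 1$, on $\R^+$ we have $g(x)=x^s$, so $(\log g)''(x)=-s/x^2\leq 0$ and $g''(x)=s(s-1)x^{s-2}\geq 0$. For $g(x)=e^{|x|}$, on $\R^+$ we have $g(x)=e^x$, so $(\log g)''\equiv 0$ and $g''(x)=e^x>0$. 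Lemma \ref{lem:one} then places items (1), (3), (4) in $\cG_2$.

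For item (2), $\sign$ is odd with $\sign(0)=0$ and is nondecreasing on $\R^+_0$, so it lies in $\cG$. To establish (\ref{eqn:condition1}) and (\ref{eqn:condition1b}) for $x\geq y\geq 0$, $z\geq 0$, I would partition on which of the arguments are strictly positive versus zero. In the generic case $y>0$, all four of $x, y, x+z, y+z$ are strictly positive, every $\sign$ evaluates to $1$, and both inequalities hold with equality. The remaining degenerate cases $y=0$ decompose into a handful of subcases depending on whether $x$ and $z$ vanish, each of which can be read directly off the definition. The main obstacle I anticipate is precisely this item: the smoothness criterion of Lemma \ref{lem:one} is unavailable for $\sign$, and the boundary behavior at $0$ must be checked by hand. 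Once all four building blocks are placed in $\cG_2$, the closure statement from Lemma \ref{lem:one} finishes the corollary.
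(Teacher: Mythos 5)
Your treatment of items (1), (3) and (4), and of the pointwise products, matches the paper's proof: check the two second-derivative criteria of Lemma \ref{lem:one} on $\R^+$ for each smooth building block and invoke the product-closure part of that lemma (the paper additionally flags the boundary issue at $y=0$ for $|x|^s$, which you handle the same way via the vanishing of the right-hand side of Eq.~(\ref{eqn:condition1})). The genuine problem is item (2). You assert that the degenerate subcases with $y=0$ ``can be read directly off the definition,'' but one of them fails: take $x>0$, $y=0$, $z>0$ in Eq.~(\ref{eqn:condition1b}). Then $\sign(x)+\sign(y+z)=1+1=2$ while $\sign(y)+\sign(x+z)=0+1=1$, and the required inequality $2\leq 1$ is false. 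So $\sign$ does not satisfy Eq.~(\ref{eqn:condition1b}) and is not in class $\cG_2$ as defined; the case analysis you propose cannot be closed. (Eq.~(\ref{eqn:condition1}) does survive your case split, since its right-hand side vanishes when $y=0$.)

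This defect is inherited from the statement itself: the paper's own proof never addresses $\sign$ beyond the blanket appeal to Lemma \ref{lem:one}, which is inapplicable to a non-differentiable function, and at the boundary it only verifies Eq.~(\ref{eqn:condition1}). Your instinct that item (2) requires a hand check was exactly right, but carrying the check out produces a counterexample rather than a proof. The downstream assertion that $x\mapsto x^k$ lies in $\cG_2$ for odd integers $k\geq 1$ can still be salvaged by verifying $g(x)=\sign(x)|x|^k$ directly (Eq.~(\ref{eqn:condition1b}) for this $g$ follows from the monotonicity of $t\mapsto kt^{k-1}$ on $\R^+_0$, and Eq.~(\ref{eqn:condition1}) from log-concavity plus the $y=0$ case); what cannot be salvaged is membership of $\sign$ itself, or of products such as $a\cdot\sign$ with $a>0$, which fail Eq.~(\ref{eqn:condition1b}) by the same counterexample.
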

\begin{proof}
This can be shown using Lemma \ref{lem:one} and taking the second derivative of $g$ and of $\log g$.
For the case where $g(x) = |x|^s$, $\frac{d^{2}\log(g)}{dx^2}$ is not defined for $x=0$. However, $g(0)= 0$ and 
Eq. (\ref{eqn:condition1}) is satisfied when $y= 0$ and we can apply Lemma \ref{lem:one} to the cases where $y\neq 0$.
\end{proof}

Note that Corollary \ref{cor:one} implies that the power functions  $f(x) = x^k$ for integer $k \geq 0$ are in class $\cG_2$. This is due to the fact that $x^k = |x|^k$ for $k$ even and $x^k = \sign(x)|x|^k$ for $k$ odd.

\begin{theorem} \label{thm:gc6}
Let $(x_i)$ for $1\leq i\leq 6$ be nonincreasing and $x_1+x_6\geq x_2+x_5 \geq x_3+x_4$. If $a_1 \geq |a_2|\geq a_5\geq |a_6|$,
 $a_3\geq |a_4|$,  and $g$ is a function in $\cG_2$, then
\begin{align}
\sum_{i=1}^6 a_i\prod_{j\neq i}g(x_i-x_j) \geq 0 \label{eqn:gc6}
\end{align}
\end{theorem}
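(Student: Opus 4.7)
The plan is to follow the approach of Theorem~\ref{thm:gc}, extending from two pairs of variables to three. Writing $\beta := \sum_{i=1}^6 a_i \prod_{j\neq i} g(x_i-x_j)$, I would first use $|g(-x)|=g(x)$ to bound each indefinite-sign term below by the appropriate negative-absolute-value quantity, then group the six summands in pairs $(a_1,a_6),(a_2,a_5),(a_3,a_4)$ and factor out the common outer differences $g(x_1-x_6),g(x_2-x_5),g(x_3-x_4)$ respectively, to obtain
\[
\beta \;\geq\; g(x_1-x_6)\,P_1 + g(x_2-x_5)\,P_2 + g(x_3-x_4)\,P_3,
\]
where each $P_k$ has the form $a_k T_k^{+} - |a_{7-k}|\,T_k^{-}$ for suitable four-factor products of $g$-values.

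Next I would establish $P_1 \geq 0$ and $P_3 \geq 0$. For $P_1$, pairing the four factors in $T_1^{\pm}$ under the involution $j\leftrightarrow 7-j$ yields termwise inequalities $g(x_1-x_j)\geq g(x_{7-j}-x_6)$, each following from the hypothesis $x_1+x_6\geq x_j+x_{7-j}$ via monotonicity of $g$; combined with $a_1\geq |a_6|$ this gives $P_1\geq 0$ using only class $\cG$. The inequality $P_3\geq 0$ requires the $\cG_2$ log-concavity condition~(\ref{eqn:condition1}): it is applied to pairs such as $g(x_1-x_3)g(x_3-x_6)\geq g(x_1-x_4)g(x_4-x_6)$ (valid because $x_1+x_6\geq x_3+x_4$) and $g(x_2-x_3)g(x_3-x_5)\geq g(x_2-x_4)g(x_4-x_5)$ (valid because $x_2+x_5\geq x_3+x_4$); multiplying and combining with $a_3\geq |a_4|$ gives $P_3\geq 0$.

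The middle quantity $P_2$ need not be nonnegative. If $P_2\geq 0$ we are done immediately; otherwise, the monotonicity $g(x_1-x_6)\geq g(x_2-x_5)$ combined with $P_2<0$ lets us replace $g(x_2-x_5)$ by the larger $g(x_1-x_6)$ in the middle term, producing the stronger lower bound $\beta \geq g(x_1-x_6)(P_1+P_2) + g(x_3-x_4)\,P_3$. Since $P_3\geq 0$ and $g(x_3-x_4)\geq 0$, the problem reduces to showing $P_1+P_2\geq 0$.

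Showing $P_1+P_2\geq 0$ is the principal obstacle. The four summands are regrouped by pairing the positive coefficients $a_1,a_5$ with the absolute values $|a_2|,|a_6|$ so as to exploit the chain $a_1\geq |a_2|\geq a_5\geq |a_6|$; factoring $g(x_1-x_2)$ out of the $(a_1,|a_2|)$ pair and $g(x_5-x_6)$ out of the $(a_5,|a_6|)$ pair, one obtains $P_1+P_2 = g(x_1-x_2)\,E_1 + g(x_5-x_6)\,E_2$ where $E_1\geq 0$ by termwise monotonicity (the third factor uses $g(x_1-x_5)\geq g(x_2-x_6)$, equivalent to $x_1+x_6\geq x_2+x_5$) but $E_2$ is not termwise nonnegative because $g(x_3-x_5)\leq g(x_3-x_6)$ and $g(x_4-x_5)\leq g(x_4-x_6)$. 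The monotonicity $g(x_1-x_2)\geq g(x_5-x_6)$ then yields $P_1+P_2 \geq g(x_5-x_6)(E_1+E_2)$, reducing the problem to $E_1+E_2\geq 0$. This last step is the crux: Lemma~\ref{lem:lemma3} provides precisely the comparison between two shifted products of $g$-values needed for the surplus in $E_1$ to dominate the deficit in $E_2$. This is where the full strength of class $\cG_2$ (log-concavity together with convexity) becomes essential, and represents the genuine novelty of the six-variable case over the four-variable one.
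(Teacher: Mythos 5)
Your proposal is correct, and the steps check out: with $g_{i,j}=g(x_i-x_j)$ one indeed gets $\beta\geq g_{1,6}P_1+g_{2,5}P_2+g_{3,4}P_3$, then $P_1\geq 0$ termwise from $x_1+x_6\geq x_j+x_{7-j}$, $P_3\geq 0$ from the two applications of Eq.~(\ref{eqn:condition1}) you cite, and $P_1+P_2=g_{1,2}E_1+g_{5,6}E_2\geq g_{5,6}(E_1+E_2)$ since $E_1\geq 0$. The skeleton matches the paper's proof --- the absolute-value lower bound, the identical treatment of the $(a_3,a_4)$ pair, and the identical final appeal to Lemma~\ref{lem:lemma3} with $x=x_2-x_3$, $z=x_2-x_4$, $w=x_3-x_5$, $y=x_4-x_5$, $t=x_1-x_2$, $u=x_5-x_6$ --- but your intermediate decomposition is genuinely different. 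The paper keeps the full products $\gamma_i=\prod_{j\neq i}g_{i,j}$ and proves $a_1\gamma_1+a_5\gamma_5\geq|a_2||\gamma_2|+|a_6||\gamma_6|$ by multiplying through by $g_{2,6}$, using $g_{2,6}\leq g_{1,5}$ and $g_{1,6}\geq g_{2,5}$ to extract an auxiliary factor $\eta=g_{1,5}g_{1,6}g_{2,6}$, and reducing to a quantity $\delta$; your nested factor-out-the-common-difference scheme (pairs $(1,6),(2,5),(3,4)$ first, then $(1,2),(5,6)$) reaches the analogous quantity $E_1+E_2$ directly, avoids introducing and then cancelling $\eta$, and makes the analogy with the four-variable proof of Theorem~\ref{thm:gc} transparent. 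The one step you leave implicit does work but should be spelled out: from $E_1+E_2=a_1g_{1,3}g_{1,4}g_{1,5}-|a_2|g_{2,3}g_{2,4}g_{2,6}+a_5g_{1,5}g_{3,5}g_{4,5}-|a_6|g_{2,6}g_{3,6}g_{4,6}$, first replace $|a_2|$ by $a_1$ and $|a_6|$ by $a_5$ (legitimate since the subtracted products are nonnegative and $a_1\geq|a_2|$, $a_5\geq|a_6|$), then replace $a_5$ by $a_1$ on its bracket in the case that bracket is negative (using $a_1\geq a_5$), arriving at $a_1\bigl(g_{1,5}(g_{1,3}g_{1,4}+g_{3,5}g_{4,5})-g_{2,6}(g_{2,3}g_{2,4}+g_{3,6}g_{4,6})\bigr)$, which is nonnegative by $g_{1,5}\geq g_{2,6}$ together with Lemma~\ref{lem:lemma3}'s conclusion $g_{1,3}g_{1,4}+g_{3,5}g_{4,5}\geq g_{2,3}g_{2,4}+g_{3,6}g_{4,6}$. (One wording quibble: passing from $g_{2,5}P_2$ to $g_{1,6}P_2$ when $P_2<0$ gives a \emph{smaller}, not stronger, lower bound for $\beta$; the chain of inequalities is nonetheless valid.)
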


\begin{proof}
Define  $g_{i,j} = g(x_i-x_j)$ and $\gamma_i = \prod_{j\neq i}g_{i,j}$. Then the left hand side of Eq. (\ref{eqn:gc6}) can be written as $\sum_{i=1}^6 a_i\gamma_i$.
Note that $\sum_i a_i\gamma_i \geq a_1\gamma_1 - |a_2||\gamma_2| + \cdots - |a_6||\gamma_6|$. 
It is easy to see that $\gamma_1\geq |\gamma_2|$.

Consider the 2 terms $\gamma_3$ and $\gamma_4$.
Since $g\in \cG_2$ and $x_1-x_3\geq x_4-x_6$, by setting
$x = x_1-x_3$, $y = x_4-x_6$ and $z = x_3-x_4$ in Eq. (\ref{eqn:condition1}), we see that $g_{1,3}g_{3,6} \geq g_{1,4}g_{4,6}$. 

Similarly by setting
$x = x_2-x_3$, $y = x_4-x_5$ and $z = x_3-x_4$ in Eq. (\ref{eqn:condition1}),  we see that $x_2-x_3\geq x_4-x_5$ implies $g_{2,3}g_{3,5}\geq g_{2,4}g_{4,5}$. This means that $a_3\gamma_3-|a_4||\gamma_4| \geq a_3(\gamma_3-|\gamma_4|) \geq 0$. 

Since $x_1-x_2 \geq x_5-x_6$ by setting $x = x_1-x_2$, $y = x_5-x_6$ and $z = x_2-x_5$ we see that $g_{1,2}g_{2,6}\geq g_{1,5}g_{5,6}$. $x_2-x_4\geq x_3-x_5$ implies that $g_{2,4}\geq g_{3,5}$ and  $x_2-x_3\geq x_4-x_5$ implies that $g_{2,3}\geq g_{4,5}$, i.e. $g_{2,3}g_{2,4} \geq g_{3,5}g_{4,5}$. This shows that $|\gamma_2|\geq \gamma_5$.

Next we show that $a_1\gamma_1+a_5\gamma_5 \geq |a_2||\gamma_2|+|a_6||\gamma_6|$. Let us define $\eta =g_{1,5}g_{1,6}g_{2,6}\geq 0$.
 
Since $x_2-x_6 \leq x_1-x_5$, this implies that $g_{2,6}\leq g_{1,5}$ and
$g_{2,6}a_5\gamma_5 -g_{1,5}|a_2||\gamma_2| \leq g_{2,6}(a_5\gamma_5-|a_2||\gamma_2|) \leq 0$. 
Since $g_{1,6}\geq g_{2,5}$,
it is straightforward to show that
\begin{align}
0 &\geq g_{2,6}(a_5\gamma_5-|a_2||\gamma_2|) \geq g_{2,6}a_5\gamma_5 -g_{1,5}|a_2||\gamma_2| \nonumber \\
& = g_{1,5}g_{2,5}g_{2,6}\left(a_5g_{3,5}g_{4,5}g_{5,6}-|a_2|g_{1,2}g_{2,3}g_{2,4}\right) \geq \eta\left(a_5g_{3,5}g_{4,5}g_{5,6}-|a_2|g_{1,2}g_{2,3}g_{2,4}\right) 
\label{eqn:gc6-eq1}
\end{align}
Similarly
\begin{align}
&g_{2,6}(a_1\gamma_1-|a_6||\gamma_6|) \geq  g_{2,6}a_1\gamma_1-g_{1,5}|a_6||\gamma_6| \nonumber \\  
& = g_{1,5}g_{1,6}g_{2,6} \left(a_1g_{1,2}g_{1,3}g_{1,4} -|a_6|g_{3,6}g_{4,6}g_{5,6}\right) =
\eta\left(a_1g_{1,2}g_{1,3}g_{1,4} -|a_6|g_{3,6}g_{4,6}g_{5,6} \right)
\label{eqn:gc6-eq2}
\end{align}
We add Eq. (\ref{eqn:gc6-eq1}) to Eq. (\ref{eqn:gc6-eq2}) to obtain
\begin{align*} &g_{2,6}\left(a_1\gamma_1+a_5\gamma_5-|a_2||\gamma_2|-|a_6||\gamma_6|\right)
\\ & \geq \eta(a_5g_{3,5}g_{4,5}g_{5,6}-|a_2|g_{1,2}g_{2,3}g_{2,4}+a_1g_{1,2}g_{1,3}g_{1,4}-|a_6|g_{3,6}g_{4,6}g_{5,6})
\end{align*}
The right hand side of the equation above can be written as $\eta\delta$ where
\begin{equation}\label{eqn:term1}
\delta  = g_{5,6}\left(a_5g_{3,5}g_{4,5}-|a_6|g_{3,6}g_{4,6}\right) + g_{1,2}\left(a_1g_{1,3}g_{1,4}-|a_2|g_{2,3}g_{2,4}\right)
\end{equation}
  Since $a_3\gamma_3\geq |a_4|\gamma_4$, the conclusion follows if $\delta \geq 0$. Let $\mu = a_5g_{3,5}g_{4,5}-|a_6|g_{3,6}g_{4,6}$. Note that $g_{1,2}\left(a_1g_{1,3}g_{1,4}-|a_2|g_{2,3}g_{2,4}\right) \geq 0$. If $\mu \geq 0$, then $\delta \geq 0$.
For the case $\mu \leq 0$, $a_5\geq |a_6|$ implies that $\mu \geq a_5(g_{3,5}g_{4,5}-g_{3,6}g_{4,6})$. Since $g_{1,2}\geq g_{5,6}$ and $a_1\geq a_5$,
\begin{equation*}
0 \geq g_{5,6}\mu \geq g_{1,2}\mu \geq a_5g_{1,2}\left(g_{3,5}g_{4,5}-g_{3,6}g_{4,6}\right) \geq a_1g_{1,2}\left(g_{3,5}g_{4,5}-g_{3,6}g_{4,6}\right)
\end{equation*}
Similarly $a_1g_{1,4}g_{1,3}-|a_2|g_{2,3}g_{2,4}
\geq a_1(g_{1,4}g_{1,3}-g_{2,3}g_{2,4})$. This means that
\begin{align*}
\delta & \geq a_1g_{1,2}(g_{1,3}g_{1,4}+g_{3,5}g_{4,5}-g_{2,3}g_{2,4}-g_{3,6}g_{4,6})
\end{align*}
Lemma \ref{lem:lemma3} implies that $g_{1,3}g_{1,4}+g_{3,5}g_{4,5}-g_{2,3}g_{2,4}-g_{3,6}g_{4,6}\geq 0$ by choosing $x=x_2-x_3$, $z=x_2-x_4$, $w=x_3-x_5$, $y=x_4-x_5$, $t = x_1-x_2$ and $u = x_5-x_6$ and this implies that $\delta \geq 0$.
\end{proof}

Similar to Corollary \ref{cor:gc5}, we have:
\begin{corollary} \label{cor:gc7}
Let $(x_i)$ for $1\leq i\leq 7$ be nonincreasing and $x_1+x_6\geq x_2+x_5 \geq x_3+x_4$. If $a_1 \geq |a_2|\geq a_5\geq |a_6|-a_7$,
 $a_3\geq |a_4|$, $a_5, a_7\geq 0$  and $g$ is a function in $\cG_2$, then
\begin{align}
\sum_{i=1}^7 a_i\prod_{j\neq i}g(x_i-x_j) \geq 0 \label{eqn:gc7}
\end{align} 
\end{corollary}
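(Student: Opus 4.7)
The proof will follow the $2n+1$ from $2n$ reduction described at the end of the proof of Corollary \ref{cor:gc5}, now using Theorem \ref{thm:gc6} in place of Theorem \ref{thm:gc}.

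First, one reduces to the case $|a_6|\geq a_7$. The product $\prod_{j\neq 7}g(x_7-x_j)$ has six factors and, since $g$ is even or odd, it equals $\prod_{j<7}g(x_j-x_7)\geq 0$. If $a_7>|a_6|$, subtracting $(a_7-|a_6|)\prod_{j\neq 7}g(x_7-x_j)$ from the left hand side of Eq. (\ref{eqn:gc7}) only decreases it and yields an instance of the same inequality with $a_7$ replaced by $|a_6|$; all the hypotheses of the corollary are preserved.

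Next, set $\tilde{a}_i=a_ig(x_i-x_7)$ for $1\leq i\leq 6$, $\tilde{a}_7=a_7g(x_6-x_7)$, and $\alpha=\sign(a_6)$. The left hand side of Eq. (\ref{eqn:gc7}) can then be rewritten as a six-variable Schur-type sum in $x_1,\ldots,x_6$ with coefficients $\tilde{a}_1,\ldots,\tilde{a}_5,\tilde{a}_6-\alpha\tilde{a}_7$, plus the leftover term
\[ a_7\left(\prod_{j\neq 7}g(x_7-x_j)+\alpha\prod_{j\neq 6}g(x_6-x_j)\right). \]
Since $\prod_{j\neq 7}g(x_7-x_j)=g(x_6-x_7)\prod_{j=1}^5g(x_j-x_7)\geq 0$ and $\left|\prod_{j\neq 6}g(x_6-x_j)\right|=g(x_6-x_7)\prod_{j=1}^5g(x_j-x_6)$, the factorwise inequality $g(x_j-x_7)\geq g(x_j-x_6)$ on $\R^+_0$ shows this leftover is nonnegative.

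It remains to apply Theorem \ref{thm:gc6} to the six-variable sum. The constraint $x_1+x_6\geq x_2+x_5\geq x_3+x_4$ is already in the hypothesis, and the inequalities $a_1\geq|a_2|\geq a_5$ and $a_3\geq|a_4|$ transfer by monotonicity of $g$ to $\tilde{a}_1\geq|\tilde{a}_2|\geq\tilde{a}_5$ and $\tilde{a}_3\geq|\tilde{a}_4|$. The main bookkeeping step, which I expect to be the only real obstacle, is the verification $\tilde{a}_5\geq|\tilde{a}_6-\alpha\tilde{a}_7|$. Since $\alpha a_6=|a_6|\geq a_7$, one computes $|\tilde{a}_6-\alpha\tilde{a}_7|=(|a_6|-a_7)g(x_6-x_7)$; the hypothesis $a_5\geq|a_6|-a_7$ combined with $g(x_5-x_7)\geq g(x_6-x_7)$ then finishes the check, and Theorem \ref{thm:gc6} gives the desired nonnegativity.
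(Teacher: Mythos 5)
Your proposal is correct and follows essentially the same route as the paper's proof: the same reduction to $|a_6|\geq a_7$, the same substitution $\tilde{a}_i=a_ig(x_i-x_7)$, $\tilde{a}_7=a_7g(x_6-x_7)$, the same splitting off of the nonnegative leftover term $a_7\bigl(\prod_{j\neq 7}g(x_7-x_j)+\alpha\prod_{j\neq 6}g(x_6-x_j)\bigr)$, and the same final appeal to Theorem \ref{thm:gc6}. Your explicit verification that $|\tilde{a}_6-\alpha\tilde{a}_7|=(|a_6|-a_7)g(x_6-x_7)\leq\tilde{a}_5$ is a welcome spelling-out of a step the paper states only implicitly.
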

\begin{proof}
As in the proof of Theorem \ref{thm:gc6}, we define $g_{i,j} = g(x_i-x_j)$.
If $a_7 \geq |a_6|$, then $a_7\prod_{j<7}g_{7,j}$ can be written as the sum of $2$ nonnegative numbers $|a_6|\prod_{j<7}g_{7,j} + (a_7-|a_6|)\prod_{j<7}g_{7,j}$, so subtracting $ (a_7-|a_6|)\prod_{j<7}g_{7,j}$ from the left hand side of Eq. (\ref{eqn:gc7}) can only decrease it. Therefore, similar to Corollary  \ref{cor:gc5} we can assume that
$|a_6|\geq a_7\geq 0$ without loss of generality.

Let us define the variables $\tilde{a}_i = a_ig_{i,7}$ for $1\leq i\leq 6$ and $\tilde{a}_7 = a_7g_{6,7}$. Then the hypothesis implies that
$\tilde{a}_1 \geq |\tilde{a}_2| \geq \tilde{a}_5\geq |\tilde{a}_6|-\tilde{a}_7$, $\tilde{a}_3\geq |\tilde{a}_4|$,$\tilde{a}_5\geq 0$. 
The left hand side of Eq. (\ref{eqn:gc7}) can be rewritten as
\begin{equation}\sum_{i=1}^5\tilde{a}_i\prod_{i\neq j,j<7}g_{i,j}   + (\tilde{a}_6-\alpha\tilde{a}_7)\prod_{j=1}^5g_{6,j}+a_7\left(\prod_{j\neq 7}g_{7,j}+\alpha\prod_{j\neq 6}g_{6,j}\right)\label{eqn:lhsgc7}
\end{equation}
where $\alpha = \sign(a_6)$. Since  $a_7\geq 0$ and $\prod_{j\neq 7}g_{7,j}\geq |\prod_{j\neq 6}g_{6,j}|$, the last term in Eq. (\ref{eqn:lhsgc7}) is nonnegative.
By applying Theorem \ref{thm:gc6} to $\sum_{i=1}^5\tilde{a}_i\prod_{i\neq j,j<7}g_{i,j}   + (\tilde{a}_6-\alpha\tilde{a}_7)\prod_{j=1}^5g_{6,j}$ the proof is complete.
\end{proof}

\subsection{$Q$-class functions} \label{sec:q-class}
In Eq. (\ref{eqn:one}), $x^t$, $y^t$ and $z^t$ in Schur's inequality are replaced with real numbers $a$, $b$ and $c$ respectively that do not depend on $x$, $y$ and $z$.
Over the years, Schur's inequality in Eq. (\ref{eqn:schur}) has been generalized in various ways \cite{mitrinovic1990,Mitrinovic1993,grinberg:2007,Radulescu2009,BullenL2015} and in many cases $x^t$, $y^t$ and $z^t$ are replaced with general functions of $x$, $y$ and $z$ respectively. For instance, in Ref. \cite{Wright1956} we find the generalization:
\begin{theorem}
Let $x\geq y \geq z$, and $f$ a positive function that is either convex, monotonic or $\max_x f(x) < 4\min_x f(x)$. Then for nonnegative integer $k$,
\begin{equation}  \label{eqn:wright}
f(x)(x-y)^k(x-z)^k + f(y)(y-z)^k(y-x)^k + f(z)(z-x)^k(z-y)^k \geq 0
\end{equation}
\label{thm:wright}
\end{theorem}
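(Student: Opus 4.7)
Setting $p = x - y \geq 0$ and $q = y - z \geq 0$ gives $x - z = p + q$, and a direct substitution collapses the left side of Eq.~(\ref{eqn:wright}) to
\begin{equation*}
f(x)p^k(p+q)^k + (-1)^k f(y)p^k q^k + f(z)q^k(p+q)^k.
\end{equation*}
When $k$ is even (in particular $k=0$) every term is nonnegative and there is nothing to prove, so from now on I take $k$ odd and aim for
\begin{equation*}
f(x)p^k(p+q)^k + f(z)q^k(p+q)^k \geq f(y)p^k q^k,
\end{equation*}
call this inequality $(\star)$. I can also assume $p,q>0$, since otherwise two of the three terms vanish.

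For monotone $f$, $(\star)$ is essentially immediate: if $f$ is nondecreasing then $f(x) \geq f(y)$ and $(p+q)^k \geq q^k$ together give $f(x)p^k(p+q)^k \geq f(y)p^kq^k$, and the nonincreasing case is symmetric via $f(z)\geq f(y)$ and $(p+q)^k\geq p^k$. For convex $f$, the identity $y = \tfrac{q}{p+q}x + \tfrac{p}{p+q}z$ together with convexity yields $f(y) \leq \tfrac{q}{p+q}f(x) + \tfrac{p}{p+q}f(z)$; plugging this into the right side of $(\star)$ and clearing the denominator $p+q$ reduces the claim to
\begin{equation*}
f(x)p^k\bigl[(p+q)^{k+1}-q^{k+1}\bigr] + f(z)q^k\bigl[(p+q)^{k+1}-p^{k+1}\bigr] \geq 0,
\end{equation*}
which is term-wise nonnegative.

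The case I expect to be the trickiest is $M := \max f < 4\min f =: 4m$, since there no longer is any direct comparison between $f(y)$ and its neighbors. The natural move is to bound the left side of $(\star)$ below by $m(p+q)^k(p^k+q^k)$ and the right side above by $Mp^kq^k$, reducing the problem to the purely algebraic inequality $(p+q)^k(p^k+q^k) \geq (M/m)\,p^kq^k$. Two applications of AM-GM give $(p+q)^k \geq 2^k(pq)^{k/2}$ and $p^k + q^k \geq 2(pq)^{k/2}$, whose product yields $(p+q)^k(p^k+q^k) \geq 2^{k+1}p^kq^k$. For odd $k \geq 1$ this is at least $4p^kq^k$, and the strict hypothesis $M < 4m$ provides exactly the slack needed to close the proof. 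Notably, equality in AM-GM occurs at $p=q$, which explains why the constant $4$ is the right threshold to assume; a weaker bound than $M<4m$ would not survive this route at $k=1$.
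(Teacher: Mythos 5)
Your proof is correct, but it takes a genuinely different route from the paper. The paper does not prove this theorem from scratch: it quotes it from Wright and then observes that the monotonic and convex cases follow from its Theorem~\ref{thm:one} with $g(u)=u^k$, because in those cases $f(y)\leq\max(f(x),f(z))\leq f(x)+f(z)$, so the coefficient condition $a+c\geq|b|$ holds; the third hypothesis $\max f<4\min f$ is never addressed (and indeed cannot be reduced to Theorem~\ref{thm:one}, since e.g.\ $f(x)=f(z)=1$, $f(y)=3$ violates $a+c\geq|b|$ while satisfying $\max f<4\min f$). You instead give a self-contained argument covering all three cases after the same substitution $p=x-y$, $q=y-z$ that underlies the paper's Theorem~\ref{thm:one}. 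Your monotone case matches the paper's reduction in spirit; your convex case is sharper than the paper's, since you use the full convex-combination bound $f(y)\leq\frac{q}{p+q}f(x)+\frac{p}{p+q}f(z)$ rather than just $f(y)\leq\max(f(x),f(z))$ (this is essentially the content of the paper's Theorem~\ref{thm:vornicugeneral}); and your AM--GM argument $(p+q)^k(p^k+q^k)\geq 2^{k+1}p^kq^k\geq 4p^kq^k$ for odd $k\geq 1$ is exactly the missing ingredient for the $\max f<4\min f$ case, with a correct explanation of why $4$ is the natural threshold. What your approach buys is a complete elementary proof of the stated theorem; what the paper's approach buys is generality in $g$ (any function in class $\cG$, not just $u^k$), at the cost of dropping the third hypothesis.
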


Suppose $a\geq b\geq c$. If $f$ is monotonic, then it is clear that either $f(a)\geq f(b)$ or $f(c)\geq f(b)$. If $f$ is convex then $f(b)\leq \alpha f(a) + (1-\alpha)f(c) \leq \max(f(a),f(c))$ for $b=\alpha a+(1-\alpha)c$, so again either $f(a)\geq f(b)$ or $f(c)\geq f(b)$. If $f$ is a nonnegative function, then $f(a)+f(c) \geq f(b) = |f(b)|$. 
Thus Theorem  \ref{thm:one} implies the following result which is a generalization of Theorem \ref{thm:wright}.
\begin{corollary}\label{cor:cor-one}
Let $x\geq y\geq z$, $g\in \cG$ and $f$ a nonnegative monotonic or convex function. Then
\begin{equation}  \label{eqn:q}
f(x)g(x-y)g(x-z) + f(y)g(y-z)g(y-x) + f(z)g(z-x)g(z-y) \geq 0
\end{equation}
\end{corollary}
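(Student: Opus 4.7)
The plan is to show that Corollary \ref{cor:cor-one} is a direct application of Theorem \ref{thm:one} with the choice $a = f(x)$, $b = f(y)$, $c = f(z)$. Under this identification, the expression on the left of Eq. (\ref{eqn:q}) is exactly the expression in Eq. (\ref{eqn:one}), so all that remains is to verify the hypotheses of Theorem \ref{thm:one}: namely that $a, c \geq 0$ and $a + c \geq |b|$, i.e. $f(x) \geq 0$, $f(z) \geq 0$, and $f(x) + f(z) \geq f(y)$ (we may drop the absolute value since $f$ is nonnegative).

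The nonnegativity of $a$ and $c$ is immediate from the assumption that $f$ is nonnegative, so the substantive step is the bound $f(x) + f(z) \geq f(y)$ for $x \geq y \geq z$. This is precisely what the remark preceding the corollary establishes, and I would simply invoke it explicitly. In the monotonic case, $f(y)$ is sandwiched between $f(x)$ and $f(z)$, so $f(y) \leq \max(f(x), f(z)) \leq f(x) + f(z)$ because the other term is nonnegative. In the convex case, write $y = \alpha x + (1-\alpha) z$ for some $\alpha \in [0,1]$; then convexity gives $f(y) \leq \alpha f(x) + (1-\alpha) f(z) \leq \max(f(x), f(z)) \leq f(x) + f(z)$, again using nonnegativity.

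With the hypotheses verified, the conclusion follows directly from Theorem \ref{thm:one}. There is no real obstacle here; the corollary is essentially a packaging statement that translates a pointwise condition on $f$ into the coefficient condition of Theorem \ref{thm:one}. The only minor subtlety is making sure the absolute value $|b|$ in Theorem \ref{thm:one} is handled correctly, which is automatic because $f \geq 0$ means $|f(y)| = f(y)$.
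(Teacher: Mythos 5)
Your proposal is correct and follows the paper's own route exactly: identify $a=f(x)$, $b=f(y)$, $c=f(z)$, use monotonicity or convexity together with nonnegativity of $f$ to get $f(x)+f(z)\geq \max(f(x),f(z))\geq f(y)=|f(y)|$, and apply Theorem \ref{thm:one}. Nothing is missing.
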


The following definiton of $Q$-class was first introduced in Ref. \cite{godunova-levin:1985}. 
\begin{definition} \label{def:q-class}
A function $f$ is in $Q$-class if $f(\lambda x+(1-\lambda)y) \leq \frac{f(x)}{\lambda} + \frac{f(y)}{1-\lambda}$ for $\lambda \in (0,1)$.
\end{definition}
It was shown by Godunova and Levin that the set of $Q$-class functions contains all nonnegative convex or monotone functions and that 
\begin{equation}  \label{eqn:wright}
f(x)(x-y)(x-z) + f(y)(y-z)(y-x) + f(z)(z-x)(z-y) \geq 0
\end{equation}
for all $x$, $y$ and $z$ if and only if $f$ is in $Q$-class, i.e. Eq. (\ref{eqn:q}) is satisfied for $g(x) = x$.
However if $f$ is in $Q$-class, but $f$ is not convex nor monotone, then it does not necessarily imply that Eq. (\ref{eqn:q}) holds for all function $g\in\cG$. For instance if $f$ is a nonconstant $Q$-class function defined on $[0,1]$ such that $f(\frac{1}{2}) = 2(f(0)+f(1))$, then it is easy to verify that  Eq. (\ref{eqn:q}) does not hold when
$g(x) = \sign(x)\sqrt{|x|}$ by choosing $x=1$, $y=\frac{1}{2}$ and $z=0$ in Eq. (\ref{eqn:q}). 
An example of such a function is $f(x) = 4$ if $x=\frac{1}{2}$ and $f(x) = 1$ otherwise. Clearly $f$ is not constant with $f(\frac{1}{2}) = 2(f(0)+f(1))$.
To show that $f$ is in $Q$-class, note that $\frac{1}{\lambda}+\frac{1}{1-\lambda} \geq 4$ for $\lambda\in (0,1)$.
Since $f(x)\geq 1$ for all $x\in [0,1]$, this means that $\frac{f(x)}{\lambda} + \frac{f(y)}{1-\lambda} \geq 4 \geq f(\lambda x+ (1-\lambda)y)$.

\begin{definition}
Let $f, g$ be functions from $\mathbb{R}$ to $\mathbb{R}$. A function $f$  is in $Q_g$-class if 
\begin{equation} \label{eqn:extended_q}
f(\lambda x+ (1-\lambda)z) \leq \frac{f(x)g(x-z)}{g(\lambda(x-z))} +  \frac{f(z)g(x-z)}{g((1-\lambda)(x-z))} 
\end{equation}
for all $x$, $z$ and $\lambda \in (0,1)$ such that $g(\lambda(x-z))\neq 0$ and $g((1-\lambda)(x-z))\neq 0$.
\end{definition}

The following theorem extends the $Q$-class condition on $f$ so that Eq. (\ref{eqn:q}) is satisfied for arbitrary functions $g$ in $\cG$.

\begin{theorem} \label{thm:extended_q}
Let $g$ be an odd function in class $\cG$. Then Eq. (\ref{eqn:q}) is satisfied 
for all $x\geq y\geq z$ if and only if $f$ is in $Q_g$-class.
\end{theorem}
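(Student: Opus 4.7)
The plan is to show that the inequality (\ref{eqn:q}) and the $Q_g$-class condition are literally the same inequality after a change of parameters. Given a triple $x \geq y \geq z$ with $x > z$, I would introduce $\lambda = (y-z)/(x-z) \in [0,1]$ so that $y = \lambda x + (1-\lambda)z$. Then the three nontrivial differences read
\begin{equation*}
x - y = (1-\lambda)(x-z), \qquad y - z = \lambda(x-z), \qquad x - z = x-z.
\end{equation*}
Conversely, any $x > z$ together with $\lambda \in (0,1)$ produces such an ordered triple with $y = \lambda x + (1-\lambda)z$, so the two parameterizations traverse the same set of configurations.

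Next I would substitute these into the left-hand side of (\ref{eqn:q}) and use the fact that $g$ is odd to pull the minus signs out of the "reversed" differences. The sign changes in the $f(x)$ and $f(z)$ terms cancel in pairs, and the $f(y)$ term picks up a single minus sign; the resulting expression is
\begin{equation*}
f(x)\,g(x-z)\,g((1-\lambda)(x-z)) + f(z)\,g(x-z)\,g(\lambda(x-z)) - f(y)\,g(\lambda(x-z))\,g((1-\lambda)(x-z)).
\end{equation*}
Provided $g(\lambda(x-z)) \neq 0$ and $g((1-\lambda)(x-z)) \neq 0$ — which is exactly the hypothesis in the definition of $Q_g$-class — these two factors are strictly positive (since $g$ is odd, monotonically nondecreasing on $\R^+_0$, and has nonnegative arguments). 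Dividing through yields
\begin{equation*}
f(x)\,\frac{g(x-z)}{g(\lambda(x-z))} + f(z)\,\frac{g(x-z)}{g((1-\lambda)(x-z))} \geq f(y),
\end{equation*}
which is precisely Eq. (\ref{eqn:extended_q}). This single algebraic computation gives both implications at once: in one direction I read it left-to-right to conclude $f \in Q_g$-class from (\ref{eqn:q}); in the other I read it right-to-left to deduce (\ref{eqn:q}) from $f \in Q_g$-class. For the $(\Rightarrow)$ direction, when a given $(x,z,\lambda)$ in the $Q_g$-definition has $x < z$, I would simply swap $x$ and $z$ and replace $\lambda$ by $1-\lambda$, using oddness of $g$ to see that the $Q_g$-inequality is invariant under this swap.

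The only obstacle is handling degenerate cases of the $(\Leftarrow)$ direction, where $\lambda \in \{0,1\}$, $x = z$, or $g$ vanishes on an initial segment of $\R_0^+$ so that one of $g(\lambda(x-z))$, $g((1-\lambda)(x-z))$ is zero. In each of these, the $Q_g$-condition is vacuous by definition, but I still need to check (\ref{eqn:q}) directly. This is routine: whenever $g(\lambda(x-z)) = 0$, both the $f(y)$ term and the $f(z)$ term of (\ref{eqn:q}) vanish, leaving a single term $f(x)g(x-y)g(x-z)$ which is nonnegative (using that $f$ is implicitly assumed nonnegative, as it is for every $Q_g$-class function appearing in the corollaries); the case $g((1-\lambda)(x-z)) = 0$ is symmetric, and $x = z$ collapses everything to $0$ since $g(0) = 0$.
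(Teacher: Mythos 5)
Your proposal is the paper's own proof carried out in full: the paper disposes of this theorem in one line (``rewrite Eq.~(\ref{eqn:q}) as Eq.~(\ref{eqn:extended_q}) via $y=\lambda x+(1-\lambda)z$''), and your substitution, sign-tracking via oddness of $g$, and division by $g(\lambda(x-z))g((1-\lambda)(x-z))>0$ is exactly that rewriting, done correctly. The one place where you assert rather than prove is the degenerate branch of the $(\Leftarrow)$ direction: when $x>y>z$ but $g(y-z)=0$ (or symmetrically $g(x-y)=0$), Eq.~(\ref{eqn:q}) collapses to $f(x)g(x-y)g(x-z)\geq 0$, and you discharge this by declaring $f$ ``implicitly assumed nonnegative.'' That hypothesis is not in the theorem statement, and unlike the classical $Q$-class --- where setting $x=y$ in Definition~\ref{def:q-class} forces $f\geq 0$ --- membership in $Q_g$-class does not obviously imply $f\geq 0$ here, because the analogous substitution $x=z$ is excluded from Eq.~(\ref{eqn:extended_q}) by the requirement $g(\lambda(x-z))\neq 0$ (an odd $g$ has $g(0)=0$). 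So for a $g\in\cG$ vanishing on an interval $[0,\varepsilon]$, or even for $g(t)=t$ at the boundary configurations $y=z<x$ and $z<y=x$, the equivalence genuinely needs either an added nonnegativity hypothesis on $f$ or an argument that $Q_g$-class forces it; neither you nor the paper supplies one. To your credit, you are the only one who notices the issue --- the paper's proof silently skips every configuration in which a denominator of Eq.~(\ref{eqn:extended_q}) vanishes --- but flagging a gap and citing an unstated assumption does not close it.
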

\begin{proof}
The proof is simply rewriting Eq. (\ref{eqn:q}) as Eq. (\ref{eqn:extended_q}) via
$y = \lambda x + (1-\lambda) z$.
\end{proof}

Note that Godunova and Levin's result is the special case when $g(x) = x$.

\begin{corollary} \label{cor:extended q}
Let $k\geq 0$ be a real number and $g(x) = \sign(x)|x|^k$. Then Eq. (\ref{eqn:q}) is satisfied 
for all $x\geq y\geq z$ if and only if $f$ satisfies
\begin{equation} \label{eqn:new_q}
f(\lambda x+ (1-\lambda)z) \leq \frac{f(x)}{\lambda^k} +  \frac{f(z)}{(1-\lambda)^k} 
\end{equation}
for all $\lambda \in (0,1)$ and $x\neq z$.
\end{corollary}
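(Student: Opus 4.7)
The plan is to invoke Theorem \ref{thm:extended_q} with the specific choice $g(x) = \sign(x)|x|^k$ and show that, for this $g$, the abstract $Q_g$-class condition (Eq. (\ref{eqn:extended_q})) collapses to the concrete inequality (Eq. (\ref{eqn:new_q})). First I would verify that $g$ meets the hypothesis of Theorem \ref{thm:extended_q}: $g$ is odd by construction, $g(0) = 0 \geq 0$, and for $x \geq 0$ one has $g(x) = x^k$, which is monotonically nondecreasing on $\R^+_0$ whenever $k \geq 0$. Hence $g \in \cG$ and is odd.

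Next I would unpack Eq. (\ref{eqn:extended_q}) for this choice of $g$. Fix $x \neq z$ and $\lambda \in (0,1)$; by the symmetry $x \leftrightarrow z$, $\lambda \leftrightarrow 1-\lambda$ it suffices to consider $x > z$. Then $x-z$, $\lambda(x-z)$, and $(1-\lambda)(x-z)$ are strictly positive, so on each of them $g$ agrees with the $k$-th power:
$$g(x-z) = (x-z)^k, \qquad g(\lambda(x-z)) = \lambda^k(x-z)^k, \qquad g((1-\lambda)(x-z)) = (1-\lambda)^k(x-z)^k.$$
In particular the latter two are nonzero, so the $Q_g$-class inequality is nontrivially active. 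Cancelling the common factor $(x-z)^k$ in the two ratios appearing in Eq. (\ref{eqn:extended_q}) gives
$$\frac{f(x)\,g(x-z)}{g(\lambda(x-z))} = \frac{f(x)}{\lambda^k}, \qquad \frac{f(z)\,g(x-z)}{g((1-\lambda)(x-z))} = \frac{f(z)}{(1-\lambda)^k},$$
so Eq. (\ref{eqn:extended_q}) becomes exactly Eq. (\ref{eqn:new_q}). The computation is reversible, so the two conditions are equivalent on the relevant domain.

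Combining this equivalence with the biconditional of Theorem \ref{thm:extended_q} yields the corollary. There is no real obstacle beyond bookkeeping: the main thing to check is that the specific $g$ lies in the odd subclass of $\cG$, and that after recognizing $g(t)=t^k$ on positive arguments, the denominators $\lambda^k$ and $(1-\lambda)^k$ emerge from cancelling the common $(x-z)^k$.
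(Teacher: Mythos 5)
Your proof is correct and takes essentially the same route as the paper, which states this corollary as an immediate specialization of Theorem~\ref{thm:extended_q} to $g(x)=\sign(x)|x|^k$; the substitution, the cancellation of the common factor $(x-z)^k$, and the observation that the nonvanishing condition on the denominators reduces to $x\neq z$ are exactly the (implicit) bookkeeping the paper intends.
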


Note that for $k\geq 1$, Eq. (\ref{eqn:new_q}) is satisfied if $f$ is in $Q$-class, but not necessarily vice versa. Similarly for $0\leq k\leq 1$, if Eq. (\ref{eqn:new_q}) is satisfied then $f$ is in $Q$-class, but not necessarily vice versa.
In Ref. \cite{vornicu} a variation of Schur's inequality is presented:
\begin{theorem}
Let $x\geq y \geq z$ and either $a\geq b\geq c$ or $a\leq b \leq c$, $k> 0$ an integer and $f$ is either convex or monotonic such that $f(x) \geq 0$. Then
\begin{equation}  \label{eqn:vornicu}
f(a)(x-y)^k(x-z)^k + f(b)(y-z)^k(y-x)^k + f(c)(z-x)^k(z-y)^k \geq 0
\end{equation}
\label{thm:vornicu}
\end{theorem}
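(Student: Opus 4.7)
The plan is to recognize Vornicu's theorem as a special case of Theorem \ref{thm:one} in which the weight vector is $(f(a), f(b), f(c))$ and the function $g$ is a power function. Concretely, I will set $g(t) = t^k$ for the positive integer $k$ appearing in the statement. By the remark following Corollary \ref{cor:one}, this $g$ belongs to class $\cG_2 \subseteq \cG$, because $t^k = |t|^k$ when $k$ is even and $t^k = \sign(t)|t|^k$ when $k$ is odd, and both of these are built out of the basic class-$\cG_2$ functions listed there. With this choice, the left-hand side of Eq. (\ref{eqn:vornicu}) is exactly the expression $\alpha g(x-y)g(x-z) + \beta g(y-z)g(y-x) + \gamma g(z-x)g(z-y)$ appearing in Theorem \ref{thm:one}, with $\alpha = f(a)$, $\beta = f(b)$ and $\gamma = f(c)$.

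To finish, I will verify the hypotheses of Theorem \ref{thm:one} for this weight vector, namely $\alpha, \gamma \geq 0$ and $\alpha + \gamma \geq |\beta|$. Nonnegativity of $\alpha$ and $\gamma$ is immediate from $f \geq 0$, and the same hypothesis reduces the last condition to $f(a) + f(c) \geq f(b)$. Here is where I will split into the two cases allowed by the hypothesis on $f$. If $f$ is monotonic, then because $(a,b,c)$ is a monotone sequence (in either direction), $f(b)$ lies between $f(a)$ and $f(c)$, so $f(b) \leq \max(f(a), f(c)) \leq f(a) + f(c)$, the last step using $f \geq 0$. If instead $f$ is convex, write $b = \lambda a + (1-\lambda) c$ for some $\lambda \in [0,1]$ (which is possible precisely because $b$ lies between $a$ and $c$), and apply convexity to obtain $f(b) \leq \lambda f(a) + (1-\lambda) f(c) \leq f(a) + f(c)$.

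Having checked $f(a) + f(c) \geq |f(b)|$ in both regimes, Theorem \ref{thm:one} applies and yields Eq. (\ref{eqn:vornicu}) at once. There is no real technical obstacle here; the main conceptual step is simply to identify the $a, b, c$ of Theorem \ref{thm:one} with the values $f(a), f(b), f(c)$ of Theorem \ref{thm:vornicu} and to observe that the convexity-or-monotonicity hypothesis of Vornicu is precisely what controls the middle value $f(b)$ by the outer two. It is worth noting that the same argument goes through verbatim with $t^k$ replaced by any odd or even $g \in \cG$, so Theorem \ref{thm:vornicu} can be strengthened in that direction without additional effort.
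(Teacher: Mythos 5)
Your proposal is correct and follows essentially the same route the paper takes: the paper derives Theorem \ref{thm:vornicu} from Theorem \ref{thm:one} by noting that $x \mapsto x^k$ is in class $\cG$ and that a nonnegative monotonic or convex $f$ satisfies $f(a)+f(c) \geq f(b) = |f(b)|$ whenever $b$ lies between $a$ and $c$, which is exactly your verification of the weight hypotheses. Your closing observation that $t^k$ can be replaced by any $g \in \cG$ is likewise already present in the paper as Corollary \ref{cor:cor-one}.
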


Note that in Theorem \ref{thm:vornicu} the condition $a\leq b \leq c$  is redundant as it is equivalent to the case $a\geq b\geq c$. This is because $f$ is monotonic (resp. convex) if and only if $x \mapsto  f(-x)$ is monotonic (resp. convex).
The argument above regarding Eq. (\ref{eqn:q}) and the fact that $x \mapsto  x^k$ is in class $\cG$ shows that Theorem \ref{thm:one} generalizes Theorem \ref{thm:vornicu}.
In addition, the convex part of Theorem \ref{thm:vornicu} can be generalized as:
\begin{theorem}
Let $x\geq y \geq z$ and $a\geq c$, $k> 0$ an integer and $f$ is a convex function such that $f(x) \geq 0$. Then
\begin{equation}  \label{eqn:vornicugeneral}
\alpha f(a)(x-y)^k(x-z)^k + f(b)(y-z)^k(y-x)^k + (1-\alpha)f(c)(z-x)^k(z-y)^k \geq 0
\end{equation}
for $\alpha\in [0,1]$ and $b=\alpha a+(1-\alpha)c$.
\label{thm:vornicugeneral}
\end{theorem}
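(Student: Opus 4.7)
The plan is to reduce the three-variable inequality to an expression in just two nonnegative parameters by setting $r = x - y \geq 0$ and $s = y - z \geq 0$, so that $x - z = r + s$. Under this substitution, and using $(y-x)^k = (-1)^k r^k$, $(z-x)^k = (-1)^k(r+s)^k$, $(z-y)^k = (-1)^k s^k$, the left hand side of Eq.~(\ref{eqn:vornicugeneral}) becomes
\[
\alpha f(a)\, r^k (r+s)^k \;+\; (-1)^k f(b)\, r^k s^k \;+\; (1-\alpha) f(c)\, s^k (r+s)^k.
\]

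The argument then splits on the parity of $k$. When $k$ is even, all three summands are products of nonnegative quantities (using $f \geq 0$), so the inequality holds trivially with no further input needed. The substantive case is $k$ odd, where the middle term carries a minus sign and is the only potentially negative contribution. Here I would invoke the convexity of $f$: since $a \geq c$ and $b = \alpha a + (1 - \alpha) c$ is a convex combination with $\alpha \in [0,1]$, convexity gives $f(b) \leq \alpha f(a) + (1 - \alpha) f(c)$. Applying this bound to the middle term reduces the desired inequality to
\[
\alpha f(a)\, r^k\bigl[(r+s)^k - s^k\bigr] \;+\; (1-\alpha) f(c)\, s^k\bigl[(r+s)^k - r^k\bigr] \;\geq\; 0.
\]

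The final step is the elementary observation that for $r,s \geq 0$ and integer $k > 0$ we have $(r+s)^k \geq r^k$ and $(r+s)^k \geq s^k$, so both bracketed factors are nonnegative; together with $f(a), f(c) \geq 0$ and $\alpha, 1 - \alpha \geq 0$, the inequality follows. I do not anticipate a real obstacle here: the proof is essentially a one-line application of convexity after reducing to the parameters $r,s$, and the hypothesis $a \geq c$ enters only to ensure that $b$ lies in the segment $[c,a]$ so that convexity of $f$ gives the required upper bound on $f(b)$.
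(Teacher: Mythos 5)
Your proof is correct and follows essentially the same route as the paper, which obtains this theorem by feeding the convexity bound $f(b)\leq \alpha f(a)+(1-\alpha)f(c)$ into Theorem \ref{thm:one} with $g(x)=x^k$; your substitution $r=x-y$, $s=y-z$ and the observation $(r+s)^k\geq\max(r^k,s^k)$ simply inline the proof of that theorem. (Minor remark: the hypothesis $a\geq c$ is not actually needed for Jensen's inequality, only that $b$ is the stated convex combination.)
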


In Ref. \cite{finta:2015}, Schur's inequality is extended to 4 variables.
\begin{theorem}
Let $x_1\geq x_2\geq x_3\geq x_4\geq 0$ and $t > 0$ such that $x_1+x_4\geq x_2+x_3$, then
\begin{align}
\sum_{i=1}^4x_i^t\prod_{i\neq j}(x_i-x_j) \geq 0 \label{eqn:finta}
\end{align}
\label{thm:finta}
\end{theorem}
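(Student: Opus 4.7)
The plan is to derive Theorem \ref{thm:finta} as an immediate specialization of Theorem \ref{thm:gc} by choosing $g(x) = x$ and $a_i = x_i^t$. Under these choices, $g(x_i - x_j) = x_i - x_j$, and the left-hand side of Eq. (\ref{eqn:finta}) coincides exactly with the left-hand side of Eq. (\ref{eqn:gc}). It therefore suffices to check that all hypotheses of Theorem \ref{thm:gc} are satisfied.

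First I would verify that $g(x) = x$ lies in class $\cG$: it is odd, satisfies $g(0) = 0 \geq 0$, and is monotonically nondecreasing on $\R^+_0$. Next, because $x_1 \geq x_2 \geq x_3 \geq x_4 \geq 0$ and $t > 0$, the power function $x \mapsto x^t$ is well-defined and monotonically nondecreasing on $\R^+_0$, so $x_1^t \geq x_2^t$, $x_1^t \geq x_4^t$, and $x_3^t \geq x_4^t$. All $a_i = x_i^t$ are nonnegative, so $|a_i| = a_i$, which yields $a_1 \geq \max(|a_2|, |a_4|)$ and $a_3 \geq |a_4|$. The remaining assumption $x_1 + x_4 \geq x_2 + x_3$ is provided directly in the hypothesis of Theorem \ref{thm:finta}.

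With every hypothesis of Theorem \ref{thm:gc} verified, the conclusion $\sum_{i=1}^4 x_i^t \prod_{i \neq j}(x_i - x_j) \geq 0$ follows at once. There is essentially no obstacle, since Finta's inequality is a strict specialization of the more general result already proved; the nonnegativity hypothesis $x_4 \geq 0$ in the statement is used only to guarantee that $x_i^t$ is well-defined and nonnegative, so that the coefficient-size conditions of Theorem \ref{thm:gc} hold automatically from the ordering of the $x_i$.
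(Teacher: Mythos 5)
Your proposal is correct and matches the paper exactly: the paper likewise obtains Theorem \ref{thm:finta} as the special case of Theorem \ref{thm:gc} with $g(x)=x$ and $a_i=x_i^t$, using $x_4\geq 0$ and $t>0$ only to ensure the coefficients $x_i^t$ are nonnegative and ordered so that $a_1\geq\max(|a_2|,|a_4|)$ and $a_3\geq|a_4|$ hold. Your write-up simply spells out the hypothesis-checking that the paper leaves to a single sentence.
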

Ref. \cite{finta:2015} also gave an example where Eq. (\ref{eqn:finta}) does not hold if  $x_1+x_4 < x_2+x_3$. 
Theorem \ref{thm:finta} is  a special case of Theorem \ref{thm:gc}  where $x_4\geq 0$, $g(x) = x$ and $a_i = x_i^t$ for $t > 0$.
Similarly, Theorem \ref{thm:gc6} generalizes the result in Ref. \cite{finta:2018} which considered the special case of $x_6\geq 0$, $g(x) = x$ and $a_i=x_i^t$ for $t > 0$.
\section{Extension of Schur's inequality to other algebraic structures} \label{sec:structures}
So far the examples above are about functions of real numbers. In this section we look at other partially ordered sets for which equations such as Eq. (\ref{eqn:one}), Eq. (\ref{eqn:gc}), Eq. (\ref{eqn:gc5}), Eq. (\ref{eqn:gc6}) and Eq. (\ref{eqn:gc7}) can be deduced.

\begin{definition}[\cite{mathdict:1993}]
A partially ordered group $(G,+,\preceq)$ is defined as a group $G$ with group operation $+$ and a partial order $\preceq$ on $G$ such that 
 $z+x\preceq z+y\Leftrightarrow x+z \preceq y+z \Leftrightarrow x\preceq y$ for all $x,y,z\in G$. 
\end{definition}

For a partially order group $(G,+,\preceq)$, the set $\{x\in G:x\succeq 0\}=G^+$ is called the positive cone.

\begin{definition}[\cite{mathdict:1993}]
$(G,+,\preceq)$ is a partially ordered real vector space if $(G,+)$ is a  partially ordered group, $G$ is a real vector space 
and $x\preceq y$ implies $\lambda x \preceq \lambda y$ for all real $\lambda\geq 0$.
\end{definition}

\begin{definition} \label{def:cg}
Let $\cal C$ be defined as the set of tuples $\ijktuple$ satisfying the following conditions:
\begin{enumerate}
\item $(I,+_I,\preceq_I)$,  $(J,+_J,\preceq_J)$ and  $(K,+_K,\preceq_K)$ are partially ordered Abelian groups.
\item $\ast : I\times J\rightarrow K$ is a {\em distributive} operation, i.e. it satisfies $(x+_Iy)\ast z = x\ast z +_K y\ast z$ and $x\ast (y+_J z) = x\ast y +_K x\ast z$.
\item $\ast$ is nonnegativity-preserving: if $x\succeq_I 0$ and $y\succeq_J 0$, then $x\ast y\succeq_K 0$.
\end{enumerate}
\end{definition}
Note that the distributive property implies  $0_I\ast x = y\ast 0_J = 0_K$ since $x\ast y = (0_I+_Ix)\ast y = 0_I\ast y +_K x\ast y$ implies $0_I\ast y = 0_K$.
It is perhaps unusual to denote functions from $I\times J$ to $K$ with an infix notation $\ast$, but it is a useful notation to denote the distributive properties above and one must keep in mind that $(x\ast y)\ast z$ is in general undefined. However, when $x \ast x \ast \cdots \ast x$ is well-defined, we will denote it as $x^k$. 

Examples of elements in $\cal C$ are listed in Table \ref{tbl:calc}. In these examples, $I=J$ and $K$ is a partially ordered real vector space. These structures have been found useful in generalizing rearrangement inequalities \cite{wu:2020}. It is easy to verify that these tuples satisfy Definition \ref{def:cg}. For instance, for the case where $A\ast B$ is the Frobenius inner product $Tr(A^HB)$, it is well known that the trace of the product of Hermitian positive (semi)-definite matrices is positive (nonnegative) \cite{Coope1994}. Similarly the Schur product theorem states that the Hadamard product of two positive definite matrices is also positive definite. The Kronecker product of two positive definite matrices $A$ and $B$ is positive definite since the eigenvalues of the Kronecker product $A\otimes B$ are formed from the products of the eigenvalues of $A$ and $B$ (see also Ref. \cite{Horn1991}).

\begin{table}[htbp]
\begin{center}
\begin{tabular}{|c|c|c|c|c|c|}
\hline
$I=J$& $\preceq_I$&  $K$& $\preceq_K$& $\ast$&\shortstack{symmetric \\$\ast$}\\
\hline\hline
$\mathbb{R}$ & $\leq$ &  $\mathbb{R}$ & $\leq$  & multiplication & yes \\
\hline
$\mathbb{R}^n$ & \shortstack{induced by\\positive\\cone} &  $\mathbb{R}$ & $\leq$ & dot product & yes \\
\hline
$\mathbb{R}^n$ &  \shortstack{induced by\\positive\\cone}  & $\mathbb{R}$ & $\leq$ & \shortstack{$x\ast y = x^TAy$ \\with $A \geq 0$} & \shortstack{no\\yes if \\$A=A^T$} \\
\hline
$f:[0,1]\rightarrow\mathbb{R}$ & \shortstack{induced by\\positive\\cone} &  $\mathbb{R}$ & $\leq$ & \shortstack{$f\ast g =$\\$\int_{0}^1 f(x)g(x) dx$} & yes \\
\hline
$\mathbb{R}^{n\times n}$ &  \shortstack{induced by\\positive\\cone}  &  $\mathbb{R}^{n\times n}$ &  \shortstack{induced by\\positive\\cone} & \shortstack{Matrix \\multiplication} & no \\
\hline
\shortstack{Hermitian\\ matrices} &  \shortstack{ Loewner\\ order}  &  $\mathbb{R}$ & $\leq$ & \shortstack{Frobenius\\ inner\\ product} & yes \\
\hline
\shortstack{Commuting \\Hermitian\\ matrices}  & \shortstack{ Loewner\\ order}    & \shortstack{Hermitian\\ matrices}  &  \shortstack{ Loewner\\ order}  & \shortstack{Matrix \\multiplication} & yes \\
\hline
\shortstack{Hermitian\\ matrices} &  \shortstack{ Loewner\\ order}    &  \shortstack{Hermitian\\ matrices}& \shortstack{ Loewner\\ order} & \shortstack{Hadamard\\ product $\circ$} & yes \\
\hline
\shortstack{Hermitian\\ matrices} &  \shortstack{ Loewner\\ order}  &  \shortstack{Hermitian\\ matrices}& \shortstack{ Loewner\\ order} & \shortstack{Kronecker\\ product $\otimes$} & no \\
\hline
\shortstack{Hermitian\\ matrices} &  \shortstack{ Loewner\\ order}  &  \shortstack{Hermitian\\ matrices}& \shortstack{ Loewner\\ order} & \shortstack{reverse Kronecker \\ product\tablefootnote{The reverse Kronecker product is defined as $A\otimes_r B$ = $B\otimes A$.} $\otimes_r$}& no \\
\hline

\end{tabular}
\end{center}
\caption{Examples of members in $\cal C$. In these examples $I=J$.}\label{tbl:calc}
\end{table}

Definition \ref{def:g} is also applicable to structures in ${\cal C}$.
\begin{definition} 
Let $(I,+_I,\preceq_I), (J,+_J,\preceq_J)$ be partially ordered groups.
A function $g:I\rightarrow J$ is in class $\cG$ if $g(0)\succeq_J 0_J$, $x\succeq_I y\succeq_I 0_I \rightarrow g(x)\succeq_J g(y)$ and either $\forall x\in I, g(x) =g(-x)$ or $\forall x\in I, g(x)=-g(-x)$. 

Let $\ijktuple\in {\cal C}$ with $(I,+_I,\preceq_I) = (J,+_J,\preceq_J)$.
A function $g:I\rightarrow I$ is in class $\cG_2$ if $g$ is in class $\cG$, and for all $x\succeq_I y\succeq_I 0_I$ and $z\succeq_I 0$,
\begin{eqnarray} 
g(x)\ast g(y+z) &\succeq_K& g(y)\ast g(x+z)\label{eqn:condition1C} \\
g(x)+_I g(y+z) &\preceq_I& g(y)+_I g(x+z) \label{eqn:condition1bC}
\end{eqnarray}
\label{def:gC}
\end{definition}

\begin{lemma} \label{lem:gC}
Let $\ijktuple$ be a tuple in $\cal C$,  $A\succeq_I B \succeq_I 0$ and $C \succeq_J D \succeq_J 0$.
\begin{itemize}
\item $A\ast C\succeq_K B\ast D\succeq_K 0$.
\item  If in addition, $I = J = K$, $\preceq_I = \preceq_J = \preceq_K$, $+_I=+_J=+_K=+$, then $A^n\succeq_I B^n\succeq_I 0$
for all integers $n \geq 0$.
\item If in addition $I$ is a partially ordered real vector space, then $p(A)\succeq_I p(B)\succeq_I 0$ for polynomials $p$ with nonnegative real coefficients. 
\item
If in addition $\ast$ is symmetric and associative, then  $A^n\ast (B+C)^n \succeq_I B^n \ast (A+C)^n \succeq_I 0$ and $(A+C)^n+B^n\succeq_I (B+D)^n +A^n$  for all integers $n\geq 0$.
\end{itemize}
\end{lemma}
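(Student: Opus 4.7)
The proof proceeds by establishing the four assertions in order, each leveraging the previous one.

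For the first part, distributivity in the Abelian groups allows me to write
\[
A\ast C - B\ast D = (A-B)\ast C + B\ast(C-D).
\]
Since $A-B\succeq_I 0_I$, $C\succeq_J 0_J$, $B\succeq_I 0_I$, and $C-D\succeq_J 0_J$, the nonnegativity-preserving property of $\ast$ places both summands in the positive cone of $K$, and the nonnegativity of $B\ast D$ is immediate. The second part is then a one-line induction on $n$: the base case $n=1$ is the hypothesis, and the inductive step applies the first part with $C:=A^{n-1}$, $D:=B^{n-1}$, using $I=J=K$. The third part follows by writing $p(A)-p(B)=\sum_k c_k(A^k-B^k)$; since each $c_k\geq 0$, each $A^k-B^k\succeq_I 0$ by the second part, and $I$ is a partially ordered real vector space, the sum lies in the positive cone.

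The fourth part carries the substance. Symmetry (commutativity) and associativity of $\ast$, together with distributivity and commutativity of $+$, yield the usual binomial identity
\[
(X+Y)^n=\sum_{k=0}^{n}\binom{n}{k}\,X^k\ast Y^{n-k},
\]
where no multiplicative identity is needed because the extreme terms $k=0$ and $k=n$ are pure powers. For the first inequality of this part, I would expand and collect:
\[
A^n\ast(B+C)^n - B^n\ast(A+C)^n=\sum_{k=0}^{n}\binom{n}{k}\bigl(A^n\ast B^k - B^n\ast A^k\bigr)\ast C^{n-k}.
\]
Commutativity and associativity give $A^n\ast B^k - B^n\ast A^k=(A^{n-k}-B^{n-k})\ast(A^k\ast B^k)$ for each $k\leq n$. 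By the second part, $A^{n-k}\succeq_I B^{n-k}$ and $A^k\ast B^k\succeq_I 0$; combining with $C^{n-k}\succeq_I 0$ and two applications of the nonnegativity-preserving property places every summand in the positive cone, and the lower bound by $0$ is similar.

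For the second inequality of the fourth part, the same binomial expansion gives
\[
\bigl[(A+C)^n+B^n\bigr]-\bigl[(B+D)^n+A^n\bigr]=\sum_{k=0}^{n-1}\binom{n}{k}\bigl(A^k\ast C^{n-k} - B^k\ast D^{n-k}\bigr),
\]
and the second part gives $A^k\succeq_I B^k\succeq_I 0$ and $C^{n-k}\succeq_I D^{n-k}\succeq_I 0$ for every $k$ in range, so the first part makes each summand nonnegative. The only obstacle is bookkeeping through the binomial manipulations in a structure without an assumed identity and verifying that commutativity plus associativity really does give the standard expansion; none of the individual steps is deep, but the argument needs care to avoid invoking properties not present in $\cal C$.
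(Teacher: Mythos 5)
Your proposal is correct and follows essentially the same route as the paper: the telescoping decomposition $(A-B)\ast C + B\ast(C-D)$ is the paper's two-step chain $A\ast C\succeq_K B\ast C\succeq_K B\ast D$, and the fourth part is handled by the same term-by-term comparison of binomial expansions, with your factorization $A^n\ast B^k-B^n\ast A^k=(A^{n-k}-B^{n-k})\ast(A^k\ast B^k)$ matching the paper's chain $A^n\ast B^k\ast C^{n-k}\succeq_I A^k\ast B^{n-k}\ast B^k\ast C^{n-k}=B^n\ast A^k\ast C^{n-k}$. The bookkeeping caveats you raise (no multiplicative identity, the $k=0$ and $k=n$ terms) are shared with, and left implicit in, the paper's own proof.
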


\begin{proof}
 If $A\succeq_I B \succeq_I 0$ and
$C \succeq_J D \succeq_J 0$, then $(A-B)\ast C \succeq_K 0$ implies that $A\ast C \succeq_K B\ast C$. This implies that $A\ast C \succeq_K B\ast C \succeq_K B\ast D\succeq 0$. 
If $I = J = K$, $\preceq_I = \preceq_J = \preceq_K$, then this implies that $A^n\succeq_I B^n\succeq_I 0$. Similarly if $I$ is a real vector space and $\alpha\geq 0$, then $\alpha A^n \succeq_I \alpha B^n$ and thus $p(A)\succeq_I p(B)$.
Finally, if $\ast$ is symmetric and associative, the terms of $A^n\ast(B+C)^n$ and the corresponding terms of $B^n\ast(A+C)^n$ satisfy the relationship $\binom{n}{k}A^n\ast B^k\ast C^{n-k} \succeq_I \binom{n}{k}A^k\ast A^{n-k}\ast B^k\ast C^{n-k} \succeq_I \binom{n}{k}A^k\ast B^{n-k}\ast B^k\ast C^{n-k}
\succeq_I  \binom{n}{k}B^n\ast A^k\ast C^{n-k}$ when $0\leq k\leq n$. Similarly,  $(A+C)^n+B^n = \sum_{k=0}^{n-1} \binom{n}{k}A^k \ast C^{n-k} + A^n +  B^n$ and $(B+D)^n+A^n = \sum_{k=0}^{n-1} \binom{n}{k}B^k \ast D^{n-k} + A^n +  B^n$ and for each $0\leq k \leq n-1$, the corresponding terms satisfy  $\binom{n}{k}A^k \ast C^{n-k} \succeq_I \binom{n}{k}B^k \ast D^{n-k}$.
\end{proof}

The conditions $A^n\ast (B+C)^n \succeq_I B^n \ast (A+C)^n \succeq_I 0$ and $(A+C)^n+B^n\succeq_I (B+D)^n +A^n$ in  Lemma \ref{lem:gC} show that the power functions  $x\rightarrow  x^n$ for integers $n\geq 0$ satisfy the conditions in Eqs. (\ref{eqn:condition1C})-(\ref{eqn:condition1bC}) in Definition  \ref{def:gC} that the nonlinear function $g$ in Theorem \ref{thm:gc6} and Corollary \ref{cor:gc7} needs to satisfy. 

The special case $I=J=K$ is important and useful for products of more than 2 terms.
\begin{definition} \label{def:cg2}
Let ${\cal C}_2$ be defined as the set of tuples $\ituple$ such that $(I,+,\preceq_I,I,+,\preceq_I,I,+,\preceq_I,\ast)$ is in $\cal C$.
\end{definition}

If $(I,+,\succeq_I))$ is a partially ordered group with an associative, distributive and nonnegativity preserving operation $\ast : I\times I \rightarrow I$ whose identity is in $I$, then $\ituple$ is a partially ordered ring. If in addition $\ast$ is commutative, then $\ituple$ is a partially ordered commutative ring.
Lemma \ref{lem:gC} along with 
the proof of Lemma \ref{lem:lemma3} can be used to show the following result.
\begin{lemma}\label{lem:lemma3G}
If $(I,+,\succeq_I,\ast)$ is a partially ordered commutative ring and $x\succeq_I y\succeq_I 0$, $z\succeq_I w\succeq_I 0$, $t\succeq_I u\succeq_I 0$, 
then $(x+t)^n\ast (z+t)^n - x^n\ast z^n \succeq_I (y+u)^n\ast (w+u)^n - y^n\ast w^n$ for all integers $n\geq 0$.
\end{lemma}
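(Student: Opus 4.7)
My plan is to transcribe the proof of Lemma~\ref{lem:lemma3} into the partially ordered commutative ring setting, with the general function $f\in\cG_2$ replaced by the power map $a\mapsto a^n$. The three ingredients of the original proof were monotonicity of $f$, the multiplicative inequality~\eqref{eqn:condition1C}, and the additive inequality~\eqref{eqn:condition1bC}; Lemma~\ref{lem:gC} supplies exactly these properties for the power map in any partially ordered commutative ring, with commutativity and associativity being needed to invoke its fourth bullet.

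First, I would use monotonicity to bound
\[
(x+t)^n\ast (z+t)^n \;\succeq_I\; (x+u)^n\ast (z+u)^n,
\]
which follows from $x+t\succeq_I x+u\succeq_I 0$ and $z+t\succeq_I z+u\succeq_I 0$ combined with the second bullet of Lemma~\ref{lem:gC} to raise to the $n$-th power and the first bullet to multiply.

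Next, the distributive identity
\[
(x+u)^n\ast(z+u)^n - (y+u)^n\ast(w+u)^n = (x+u)^n\ast\bigl((z+u)^n - (w+u)^n\bigr) + \bigl((x+u)^n - (y+u)^n\bigr)\ast(w+u)^n
\]
splits the difference into two pieces where the fourth bullet of Lemma~\ref{lem:gC} applies. Setting $A=z,B=w,C=D=u$, and then $A=x,B=y,C=D=u$, yields
\[
(z+u)^n - (w+u)^n \;\succeq_I\; z^n - w^n, \qquad (x+u)^n - (y+u)^n \;\succeq_I\; x^n - y^n,
\]
with both right-hand sides nonnegative by monotonicity. Multiplying the first by the nonnegative $(x+u)^n$ on the left and the second by $(w+u)^n$ on the right, using the first bullet of Lemma~\ref{lem:gC}, lowers the expression to $(x+u)^n\ast(z^n-w^n) + (x^n-y^n)\ast(w+u)^n$.

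Finally, monotonicity gives $(x+u)^n\succeq_I x^n\succeq_I 0$ and $(w+u)^n\succeq_I w^n\succeq_I 0$, and since $z^n-w^n$ and $x^n-y^n$ are themselves nonnegative, one more invocation of the first bullet of Lemma~\ref{lem:gC} lets me replace $(x+u)^n$ by $x^n$ and $(w+u)^n$ by $w^n$. Distributivity then collapses the result to $x^n\ast z^n - y^n\ast w^n$, closing the chain. The only real difficulty is bookkeeping: at every step one must verify that the inequality being multiplied involves a factor that is genuinely $\succeq_I 0$, so that the order is preserved; commutativity of $\ast$ is what permits multiplying by a nonnegative element on either side without loss.
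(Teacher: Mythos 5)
Your proposal is correct and is exactly the argument the paper intends: the paper gives no separate proof of Lemma~\ref{lem:lemma3G}, stating only that it follows from Lemma~\ref{lem:gC} together with the proof of Lemma~\ref{lem:lemma3}, and your transcription carries out precisely that chain (monotonicity, the distributive splitting, the additive inequality from the fourth bullet, then lowering $(x+u)^n$ and $(w+u)^n$ to $x^n$ and $w^n$). The bookkeeping of nonnegative factors at each multiplication step is handled correctly, so nothing is missing.
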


\begin{definition}
For a partial ordered set $(I,\succeq_I)$, $u$ is an upper bound of $S\subset I$ if $u\succeq_I s$ for all $s\in S$. A greatest element of $S\subset I$ is an element $g\in S$ such that $g\succeq_I s$ for all $s\in S$.
\end{definition}

All rows in Table \ref{tbl:calc} except for rows 2,3,4,6 are partially ordered rings.
For the last 2 rows in Table \ref{tbl:calc}, since the Kronecker product $A\otimes B$ are of higher order than the order of $A$ and $B$, in order to define it as an element of ${\cal C}_2$, the set $I$ needs to be defined as the set of Hermitian matrices of {\em all} orders\footnote{Another interpretation of the Kronecker product of matrices is a case where $I\neq J\neq K$. For instance, for fixed integers $n, m > 0$, $I$, $J$ and $K$ can be chosen as the set of $n\times n$, $m\times m$ and $nm\times nm$ Hermitian matrices respectively.}.
We have the following generalization of Theorem \ref{thm:one}.
\begin{theorem} \label{thm:general_one}
Let $x\succeq_I y \succeq_I z$ and $n\geq 0$ an integer and $p\in \cal G$.
\begin{itemize}
\item Let $\ijktuple\in \cal C$, $a,b,c\in \R$ such that $a,c\geq 0$, $a+c\geq |b|$ and $K$ is a partially ordered real vector space,
If $I=J$, $\preceq_I = \preceq_J$, 
then 
\begin{equation} \label{eqn:oneC1}
a (x-_Iy) \ast (x-_Jz) + b (y-_Ix)\ast (y-_Jz) + c (z-_Ix)\ast (z-_Jy) \succeq_K 0
\end{equation} 
\item If $\ituple \in {\cal C}_2$, $a,c\succeq_I 0$,  and $a+c$ is an upper bound of $\{b,-b\}$, then 
\begin{equation} \label{eqn:oneC2}
a\ast (x-y)^n \ast (x-z)^n + b \ast (y-x)^n\ast (y-z)^n + c \ast (z-x)^n\ast (z-y)^n \succeq_I 0
\end{equation}
\item If $\ituple \in {\cal C}_2$, $a,c\succeq_I 0$,  $a+c$ is an upper bound of $\{b,-b\}$, and $I$ is a partially ordered real vector space, then
\begin{equation} \label{eqn:oneC3}
a\ast p(x-y) \ast p(x-z) + b \ast p(y-x)\ast p(y-z) + c \ast p(z-x)\ast p(z-y) \succeq_I 0
\end{equation}
\end{itemize}
\end{theorem}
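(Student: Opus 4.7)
The plan is to reuse the substitution $r = x - y$ and $s = y - z$ from the proof of Theorem \ref{thm:one}. This is well-defined in Part 1 because the hypothesis $I = J$ identifies the subtraction group, and in Parts 2 and 3 because ${\cal C}_2$ forces $I = J = K$; in all cases $r, s \succeq_I 0$. The crucial algebraic identity in any tuple in $\cal C$ is that minus signs pass through $\ast$: from $0_I \ast v = 0_K = u \ast 0_J$ (observed right after Definition \ref{def:cg}) and distributivity one gets $(-u) \ast v = -(u \ast v) = u \ast (-v)$, whence $(-u)^n = (-1)^n u^n$ by induction, and $p(-u) = \epsilon\, p(u)$ for $p \in \cG$ with $\epsilon \in \{+1,-1\}$ according to the parity of $p$. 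Applying these identities to $(y-x)$, $(z-x)$, $(z-y)$ reduces each of (\ref{eqn:oneC1})--(\ref{eqn:oneC3}) to an expression involving only the nonnegative elements $r$, $s$, $r+s$ (or their $n$-th powers, or their images under $p$).

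For Part 1, the reduced expression is $a(r \ast (r+s)) - b(r \ast s) + c((r+s) \ast s)$, which by distributivity equals $a(r \ast r) + (a - b + c)(r \ast s) + c(s \ast s)$ in $K$. Each base product lies in the positive cone of $K$ by nonnegativity preservation of $\ast$, and the real coefficients $a$, $a - b + c \geq a + c - |b| \geq 0$, and $c$ are all nonnegative; since $K$ is a partially ordered real vector space, scaling by a nonnegative real preserves the order, and the sum of three nonnegative elements is nonnegative.

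For Parts 2 and 3, writing $\eta = (-1)^n$ in Part 2 and $\eta = \epsilon$ in Part 3, the reduced expression is
\[
a \ast r^n \ast (r+s)^n + \eta\,b \ast r^n \ast s^n + c \ast (r+s)^n \ast s^n
\]
(or the analogue with $p$ in place of the power). The monotonicity of powers in a partially ordered ring (Lemma \ref{lem:gC}, second bullet) gives $(r+s)^n \succeq_I r^n \succeq_I 0$ and $(r+s)^n \succeq_I s^n \succeq_I 0$; for $p \in \cG$ the same monotonicity follows directly from the defining axiom together with $p(0) \succeq_I 0$. With $a,c \succeq_I 0$, iterated nonnegativity preservation then yields $a \ast r^n \ast (r+s)^n \succeq_I a \ast r^n \ast s^n$ and $c \ast (r+s)^n \ast s^n \succeq_I c \ast r^n \ast s^n$ (and similarly with $p$). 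Adding and applying distributivity, the whole expression is bounded below by $(a + \eta\,b + c) \ast r^n \ast s^n$. The hypothesis that $a + c$ is an upper bound of $\{b,-b\}$ gives $a + \eta\,b + c \succeq_I 0$ for either sign of $\eta$, and $r^n \ast s^n \succeq_I 0$, so a final invocation of nonnegativity preservation finishes the proof.

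The main obstacle is bookkeeping around the algebraic structure rather than any substantive new idea. The iterated products $r^n$ and $a \ast r^n \ast (r+s)^n$ in Parts 2 and 3 implicitly require a parenthesization convention---essentially associativity of $\ast$, consistent with the statements being phrased in the partially ordered ring setting---so that distributivity applies inside them and so that the monotonicity inequalities for powers may be multiplied by further nonnegative factors. One must also take care at each application of $\ast$-monotonicity (such as $a\ast r^n \ast (r+s)^n \succeq_I a\ast r^n \ast s^n$) that both the difference $(r+s)^n - s^n$ and the coefficient factor $a\ast r^n$ lie in the positive cone, which in turn follows from nonnegativity preservation applied iteratively. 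The sign on $b$ in Part 1 is handled uniformly by the bound $a - b + c \geq a + c - |b|$, requiring no case analysis on $\sign(b)$.
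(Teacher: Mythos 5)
Your proposal is correct and follows essentially the same route as the paper's (sketched) proof: pull the minus signs through $\ast$ via distributivity, then use nonnegativity preservation and the monotonicity of powers (Lemma \ref{lem:gC}) to bound the expression below by $(a+c\mp b)$ times a nonnegative product. The only cosmetic difference is in the first bullet, where you fully expand $r\ast(r+s)$ and $(r+s)\ast s$ into three manifestly nonnegative terms rather than invoking monotonicity directly, but this is the same computation written out; your remarks on parenthesization and sign-propagation match the paper's own caveat that the results hold for nonassociative $\ast$ provided a consistent order of operations is used.
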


\begin{proof}
We will only give a sketch of the proof as it is similar to the proof of Theorem \ref{thm:one}.
The left hand side of  Eq. (\ref{eqn:oneC1}) can be written as
$$a (x-_Iy) \ast (x-_Jz) - b (x-_Iy)\ast (y-_Jz) + c (x-_Iz)\ast (y-_Jz)$$
By Lemma \ref{lem:gC}, $(x-_Jz)\succeq_K (y-_Jz)$ implies that $a (x-_Iy) \ast (x-_Jz)\succeq_K a(x-_Iy)\ast (y-_Jz)$.
 Similarly, $c (x-_Iz)\ast (y-_Jz) \succeq _K c(x-_Iy)\ast (y-_Jz)$
 and thus $a (x-_Iy) \ast (x-_Jz) + c (x-_Iz)\ast (y-_Jz) \succeq_K (a+c)(x-_Iy)\ast (y-_Jz) \succeq_K |b|(x-_Iy)\ast (y-_Jz)$
implying
Eq. (\ref{eqn:oneC1}). 
Similarly $(x-_Jz)\succeq_K (y-_Jz)$ implies $(x-_Jz)^n\succeq_K (y-_Jz)^n$ which in turn implies that $(x-_Iy)^n \ast (x-_Jz)^n \succeq_K (x-_Iy)^n\ast (y-_Jz)^n$. This in addition with the fact that $b\ast (y-x)^n\ast(y-z)^n \preceq_I  (a+c)\ast (x-y)^n\ast(y-z)^n$  can be used to show 
Eq. (\ref{eqn:oneC2}). Finally the properties of $p \in \cal G$
implies Eq. (\ref{eqn:oneC3}).
\end{proof}

Note that if the greatest element of $\{b,-b\}$ (denoted as $\psi$) exists, then $\psi$ is an upper bound of $\{b,-b\}$ and the condition on $a+c$ in Theorem \ref{thm:general_one} can be written as $a+c\succeq_I \psi$. In particular, if $b+b\preceq_I 0$, then $\psi = -b$.
If $b+b\succeq_I 0$, then $\psi=b$.
For ${\cal C}_2$, Lemma \ref{lem:gC} shows that polynomials with only even powers or only odd powers and coefficients in the positive cone are in the class $\cal G$.
Eq. (\ref{eqn:oneC2}) and Eq. (\ref{eqn:oneC3}) are true for nonassociative $\ast$ as long as the same order of operations is applied to the terms.
As an example of applying Theorem \ref{thm:general_one} to tuples in $\cal C$, consider the inner product $x\ast y = x^TAy$, where we get:
\begin{corollary} \label{cor:gc_matrix}
If $x,y,z\in \R^n$ are vectors such that $x\geq y\geq z$ elementwise and $a,b,c$ real numbers such that $a,c\geq 0$, $a+c\geq |b|$ and $A$ a nonnegative matrix, then
\begin{equation}
a(x-y)^TA(x-z) + b(y-x)^TA(y-z) + c(z-x)^TA(z-y) \geq 0
\end{equation}
\end{corollary}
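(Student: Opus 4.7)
The plan is to derive Corollary \ref{cor:gc_matrix} as a direct specialization of the first bullet of Theorem \ref{thm:general_one}. The relevant tuple is the one in row 3 of Table \ref{tbl:calc}: take $I=J=\mathbb{R}^n$ with the partial order $\preceq_I\, =\, \preceq_J$ induced by the elementwise nonnegative orthant, take $K=\mathbb{R}$ with its usual order, and define $\ast:\mathbb{R}^n\times\mathbb{R}^n\to\mathbb{R}$ by $u\ast v = u^TAv$. Under this identification, Eq. (\ref{eqn:oneC1}) literally becomes the inequality in the statement of the corollary, so the whole task reduces to checking that the hypotheses of that bullet of Theorem \ref{thm:general_one} are satisfied.

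First I would verify that the above data indeed yields a tuple in $\mathcal{C}$. Both $(\mathbb{R}^n,+,\preceq_I)$ and $(\mathbb{R},+,\leq)$ are partially ordered abelian groups. Distributivity of $\ast$ is immediate from the bilinearity of $(u,v)\mapsto u^TAv$. The only genuine content is the nonnegativity-preserving property: if $u,v\succeq_I 0$, i.e. $u_i\geq 0$ and $v_j\geq 0$ for all $i,j$, then
\begin{equation*}
u\ast v \;=\; u^TAv \;=\; \sum_{i,j} A_{ij}\, u_i\, v_j \;\geq\; 0,
\end{equation*}
which is where the assumption that $A$ is entrywise nonnegative is used. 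Thus the tuple lies in $\mathcal{C}$, and since $K=\mathbb{R}$ is trivially a partially ordered real vector space, the ambient hypotheses of the first bullet of Theorem \ref{thm:general_one} are met.

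Next, the scalar coefficient hypotheses of that bullet, namely $a,c\geq 0$ and $a+c\geq |b|$, are exactly the hypotheses of the corollary, and the ordering hypothesis $x\succeq_I y\succeq_I z$ is the elementwise ordering $x\geq y\geq z$ assumed in the corollary. Substituting into Eq. (\ref{eqn:oneC1}) gives
\begin{equation*}
a(x-y)^TA(x-z) + b(y-x)^TA(y-z) + c(z-x)^TA(z-y) \;\geq\; 0,
\end{equation*}
which is precisely the claim.

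There is essentially no obstacle here beyond correctly matching the abstract objects of Theorem \ref{thm:general_one} to the concrete vector and matrix data of the corollary; the only step with any content is the verification that $u^TAv\geq 0$ whenever $u,v$ and $A$ are entrywise nonnegative, and this is a one-line calculation. So the proof proposal is simply: quote Theorem \ref{thm:general_one}, instantiate with the third row of Table \ref{tbl:calc}, and read off the result.
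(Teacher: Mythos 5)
Your proposal is correct and matches the paper's own route exactly: the paper presents Corollary \ref{cor:gc_matrix} precisely as the instantiation of the first bullet of Theorem \ref{thm:general_one} with the tuple $I=J=\R^n$, $K=\R$, $u\ast v = u^TAv$ from Table \ref{tbl:calc}. Your explicit check that $u^TAv=\sum_{i,j}A_{ij}u_iv_j\geq 0$ for entrywise nonnegative data is the one detail the paper leaves to the reader, and you supply it correctly.
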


Let $\preceq$ denote the Loewner order of matrices and $\otimes$ and $\circ$ denote the Kronecker product and the Hadamard product respectively. Then we have
\begin{corollary} \label{cor:gc_k_h}
If $A, B, C$ are Hermitian matrices of the same order such that $A\succeq B\succeq C$ and $a,b,c$ real numbers such that $a,c\geq 0$, $a+c\geq |b|$, then
\begin{eqnarray*}
& a(A-B)\otimes (A-C) + b(B-A)\otimes (B-C) + c(C-A)\otimes (C-B) \succeq 0\\
& a(A-C)\otimes (A-B) + b(B-C)\otimes (B-A) + c(C-B)\otimes (C-A) \succeq 0\\
& a(A-B)\circ (A-C) + b(B-A)\circ (B-C) + c(C-A)\circ(C-B) \succeq 0
\end{eqnarray*}
If in addition $A$, $B$ and $C$ commute, then 
$$a(A-B)(A-C) + b(B-A)(B-C) + c(C-A)(C-B) \succeq 0$$
\end{corollary}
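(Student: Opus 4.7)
The plan is to realize each inequality in the corollary as an immediate specialization of Theorem \ref{thm:general_one}, namely Eq.~(\ref{eqn:oneC1}), applied to the appropriate tuple from the last four rows of Table \ref{tbl:calc}. In every case we take $I = J$ to be (a suitable subspace of) Hermitian matrices equipped with the Loewner order and take $K$ to be Hermitian matrices with the Loewner order, which is a partially ordered real vector space. The tuples have already been certified to lie in $\mathcal{C}$ via the table, so the hypotheses of the first bullet of Theorem \ref{thm:general_one} (with $x = A$, $y = B$, $z = C$) are satisfied once we check that $a, b, c$ are real with $a, c \geq 0$ and $a + c \geq |b|$, which is exactly our hypothesis.

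First I would dispatch the Kronecker and Hadamard cases. For $\ast = \otimes$, nonnegativity preservation is the statement that $A, B \succeq 0$ implies $A \otimes B \succeq 0$ (the eigenvalues of the Kronecker product are products of eigenvalues); for $\ast = \circ$, it is the Schur product theorem. Plugging each into Eq.~(\ref{eqn:oneC1}) produces the first and third displayed inequalities directly. For the reverse Kronecker product $\otimes_r$, defined by $X \otimes_r Y = Y \otimes X$, applying Eq.~(\ref{eqn:oneC1}) and then unfolding this definition yields the second displayed inequality, with the arguments of each $\otimes$ exchanged.

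Next I would handle the commuting Hermitian case. Let $I$ be the real vector space of Hermitian matrices lying in a fixed commutative $\ast$-algebra containing $A$, $B$, $C$ (equivalently, simultaneously diagonalizable by a common unitary), with $\ast$ taken to be ordinary matrix multiplication. This set is closed under $+$ and under matrix multiplication and inherits the Loewner order; two commuting positive semidefinite Hermitian matrices are simultaneously diagonalizable and thus have a positive semidefinite product, which gives the required nonnegativity-preservation property. The hypotheses of Theorem \ref{thm:general_one} are then met and Eq.~(\ref{eqn:oneC1}) becomes the final displayed inequality.

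There is essentially no obstacle here; the work has been done in Theorem \ref{thm:general_one} and Table \ref{tbl:calc}. The only mild care required is that $\otimes$, $\otimes_r$, and ordinary matrix multiplication are noncommutative, so one must respect the factor ordering prescribed by Eq.~(\ref{eqn:oneC1}); doing so yields exactly the factor orderings written in each of the four inequalities in the corollary.
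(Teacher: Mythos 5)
Your proposal is correct and takes essentially the same route as the paper, which states this corollary without a separate proof as an immediate application of Eq.~(\ref{eqn:oneC1}) in Theorem~\ref{thm:general_one} to the Kronecker, reverse Kronecker, Hadamard, and commuting-Hermitian rows of Table~\ref{tbl:calc}. Your explicit checks of nonnegativity preservation and your care in fixing a commutative algebra containing $A$, $B$, $C$ simply spell out what the paper leaves implicit.
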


Theorem \ref{thm:gc} can also be generalized. We use the notation $\Asterisk_i x_i$ to denote $x_1\ast x_2 \ast \cdots$ for a symmetric and associative operation $\ast$.

\begin{theorem} \label{thm:gcC}
Let $\ituple$ be a partially ordered commutative ring. 
Let $x_i\succeq_I x_{i+1}$ for $1\leq i \leq 3$ be such that $x_1+x_4\succeq_I x_2+x_3$ and $n\geq 0$ an integer.
If there exists an upper bound of $\{a_i, -a_i\}$ denoted as $\hat{a}_i$ for all even $i$, and $a_1 \succeq_I \hat{a}_2 \succeq_I a_3 \succeq_I \hat{a}_4 \succeq_I 0$, then
\begin{align}
\sum_{i=1}^4 a_i \ast \Asterisk_{j\neq i} (x_i-x_j)^n \succeq_I 0\label{eqn:gcC}
\end{align}
\end{theorem}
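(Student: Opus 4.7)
The plan is to transplant the proof of Theorem \ref{thm:gc} to the ring setting, replacing the scalar monotonicity of $g$ on $\R^+_0$ with the product inequality in Lemma \ref{lem:gC}, and replacing the sign-based dichotomy on $w_2$ with a purely algebraic rewrite that does not require a total order. Throughout I set $g_{ij} := (x_i-x_j)^n$ for $i<j$; each such $g_{ij}$ lies in the positive cone by Lemma \ref{lem:gC}. Since $(x_j-x_i)\ast A = -((x_i-x_j)\ast A)$ follows from distributivity and the additive group structure, an easy induction gives $(x_j-x_i)^n = (-1)^n g_{ij}$, so the overall sign in $\Asterisk_{j\neq i}(x_i-x_j)^n$ is $(-1)^{(i-1)n}$. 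For the ``even'' indices $i=2,4$ I use that $\hat{a}_i\succeq_I \pm a_i$ together with the nonnegativity-preservation of $\ast$ to deduce $(-1)^n a_i \ast A \succeq_I -\hat{a}_i\ast A$ for every $A\succeq_I 0$, regardless of the parity of $n$.

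Applying these estimates to the left-hand side of Eq. (\ref{eqn:gcC}) lower-bounds it by $g_{14}\ast w_1 + g_{23}\ast w_2$, where
\[
w_1 := a_1\ast g_{12}\ast g_{13} - \hat{a}_4\ast g_{24}\ast g_{34}, \qquad w_2 := a_3\ast g_{13}\ast g_{34} - \hat{a}_2\ast g_{12}\ast g_{24}.
\]
From $x_1+x_4\succeq_I x_2+x_3$ together with $x_1\succeq_I x_2\succeq_I x_3\succeq_I x_4$ I read off $x_1-x_2\succeq_I x_3-x_4$, $x_1-x_3\succeq_I x_2-x_4$, and $x_1-x_4\succeq_I x_2-x_3$, which by Lemma \ref{lem:gC} lift to $g_{12}\succeq_I g_{34}\succeq_I 0$, $g_{13}\succeq_I g_{24}\succeq_I 0$, and $g_{14}\succeq_I g_{23}\succeq_I 0$.

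Combining these with the coefficient chain $a_1\succeq_I \hat{a}_2\succeq_I a_3\succeq_I \hat{a}_4\succeq_I 0$ and iterating the product inequality of Lemma \ref{lem:gC} gives $w_1\succeq_I 0$ directly, while the rewrite
\[
w_1+w_2 = g_{12}\ast(a_1\ast g_{13}-\hat{a}_2\ast g_{24}) + g_{34}\ast(a_3\ast g_{13}-\hat{a}_4\ast g_{24})
\]
shows $w_1+w_2\succeq_I 0$ for the same reason, since each parenthesized factor is a difference of two $\ast$-products in which the first dominates the second termwise.

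The step I expect to be the genuine obstacle is the final one: in the proof of Theorem \ref{thm:gc} one passes from ``$w_1\geq 0$ and $w_1+w_2\geq 0$'' to ``$g_{14}w_1+g_{23}w_2\geq 0$'' via a case split on the sign of $w_2$, which is unavailable in a merely partially ordered ring. I plan to bypass this with the unconditional identity
\[
g_{14}\ast w_1 + g_{23}\ast w_2 = (g_{14}-g_{23})\ast w_1 + g_{23}\ast(w_1+w_2),
\]
whose right-hand side is a sum of two $\ast$-products of elements of the positive cone (using $g_{14}\succeq_I g_{23}\succeq_I 0$, $w_1\succeq_I 0$, and $w_1+w_2\succeq_I 0$) and is therefore $\succeq_I 0$. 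This yields Eq. (\ref{eqn:gcC}) and completes the proof.
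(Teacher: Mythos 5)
Your proof is correct, and up to the last step it coincides with the paper's: the same reduction of the left-hand side of Eq.~(\ref{eqn:gcC}) to $g_{14}\ast w_1+g_{23}\ast w_2$ with identical $w_1,w_2$, the same verification that $w_1\succeq_I 0$, and the same regrouping of $w_1+w_2$ into $g_{12}\ast(a_1\ast g_{13}-\hat a_2\ast g_{24})+g_{34}\ast(a_3\ast g_{13}-\hat a_4\ast g_{24})$. You diverge exactly at the point you flagged, but the paper does not resolve it by a case split either: it uses the extra link $\hat a_2\succeq_I a_3$ in the hypothesis chain together with Lemma~\ref{lem:gC} (with $A=x_1-x_2$, $B=x_3-x_4$, $C=x_2-x_3$, giving $(x_1-x_3)^n\ast(x_3-x_4)^n\preceq_I(x_1-x_2)^n\ast(x_2-x_4)^n$) to show that $w_2\preceq_I 0$ unconditionally, whence $g_{14}\ast w_2\preceq_I g_{23}\ast w_2$ and the whole expression is bounded below by $g_{14}\ast(w_1+w_2)$. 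Your Abel-type identity $g_{14}\ast w_1+g_{23}\ast w_2=(g_{14}-g_{23})\ast w_1+g_{23}\ast(w_1+w_2)$ reaches the same conclusion without ever determining the sign of $w_2$; as a byproduct it uses only $a_1\succeq_I\hat a_2\succeq_I 0$, $a_1\succeq_I\hat a_4$ and $a_3\succeq_I\hat a_4\succeq_I 0$, i.e.\ the hypotheses of Theorem~\ref{thm:gc} transplanted to the ring, and never invokes $\hat a_2\succeq_I a_3$. So the paper's route buys the explicit intermediate fact $w_2\preceq_I 0$, while yours is slightly shorter and establishes the theorem under marginally weaker hypotheses.
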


\begin{proof}
The left hand side of Eq. (\ref{eqn:gcC}), denoted as $\beta$, can be written as
\begin{align*} \beta = & a_1\ast (x_1-x_4)^n\ast (x_1-x_2)^n \ast (x_1-x_3)^n+a_4\ast (x_4-x_1)^n\ast (x_4-x_3)^n \ast (x_4-x_2)^n \\
+ & a_2\ast (x_2-x_3)^n\ast (x_2-x_1)^n\ast (x_2-x_4)^n + a_3\ast (x_3-x_2)^n\ast (x_3-x_1)^n\ast (x_3-x_4)^n
\end{align*}
Since $x_1-x_2\succeq_I x_3-x_4$ and $x_1-x_3\succeq_I  x_2-x_4$, we can bound this by:
\begin{align*} 
\beta \succeq_I &  (x_1-x_4)^n\ast(a_1\ast (x_1-x_2)^n\ast (x_1-x_3)^n-\hat{a}_4\ast (x_3-x_4)^n\ast (x_2-x_4)^n) \\
+ & (x_2-x_3)^n\ast(a_3\ast (x_1-x_3)^n\ast (x_3-x_4)^n -\hat{a}_2\ast (x_1-x_2)^n\ast (x_2-x_4)^n)\\
= & (x_1-x_4)^n\ast w_1+(x_2-x_3)^n\ast w_2
\end{align*}
where $w_1 = a_1\ast (x_1-x_2)^n\ast (x_1-x_3)^n-\hat{a}_4\ast (x_3-x_4)^n\ast (x_2-x_4)^n$ and $w_2 = a_3\ast (x_1-x_3)^n\ast (x_3-x_4)^n -\hat{a}_2\ast (x_1-x_2)^n\ast(x_2-x_4)^n$.
Thus $w_1\succeq_I 0$. By choosing $A=x_1-x_2$, $B=x_3-x_4$ and $C=x_2-x_3$, Lemma \ref{lem:gC}  shows that
$(x_1-x_3)^n\ast (x_3-x_4)^n  \preceq_I (x_1-x_2)^n\ast(x_2-x_4)^n$. Since $\hat{a}_2\succeq_I a_3$, this implies that
$w_2\preceq_I 0$. Since $(x_1-x_4)\succeq_I (x_2-x_3)$, this implies $(x_1-x_4)^n\ast w_2\preceq_I (x_2-x_3)^n\ast w_2$, which means that
 $\beta \succeq_I (x_1-x_4)^n\ast (w_1+w_2)$.  Next $w_1+w_2$ can be written as:
\begin{align*}
w_1+w_2 =&  (x_1-x_2)^n\ast(a_1\ast(x_1-x_3)^n-\hat{a}_2\ast(x_2-x_4)^n)  \\
+ &(x_3-x_4)^n\ast(a_3\ast(x_1-x_3)^n-\hat{a}_4\ast(x_2-x_4)^n)
\end{align*}
which is $\succeq_I 0$ since $a_1\succeq_I \hat{a}_2\succeq_I a_3\succeq_I \hat{a}_4$ and $x_1-x_3\succeq_I x_2-x_4$.
\end{proof}

Corollary \ref{cor:gc5} can be generalized as:

\begin{theorem} \label{thm:gcC5}
Let $\ituple$ be a partially ordered commutative ring. Let $x_i\succeq_I x_{i+1}$ for $1\leq i \leq 4$ be such that $x_1+x_4\succeq_I x_2+x_3$ and $n\geq 0$ an integer. Assume there exists an upper bound of $\{a_i, -a_i\}$ denoted as $\hat{a}_i$ for all even $i$ and $a_1 \succeq_I \hat{a}_2 \succeq_I a_3 \succeq_I 0$.
 If $a_3+a_5\succeq_I \hat{a}_4 \succeq_I a_5 \succeq_I 0$ or $a_5 \succeq_I \hat{a}_4\succeq_I 0$,  then
\begin{align}
\sum_{i=1}^5 a_i \ast \Asterisk_{j\neq i} (x_i-x_j)^n \succeq_I 0 \label{eqn:gcC5}
\end{align}
\end{theorem}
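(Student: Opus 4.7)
The plan is to parallel the real-valued proof of Corollary \ref{cor:gc5} while replacing its sign-based substitution with one legal in a commutative partially ordered ring, and then to invoke the four-variable Theorem \ref{thm:gcC}.

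I would begin by reducing to the first branch of the hypothesis, $a_3+a_5\succeq_I\hat{a}_4\succeq_I a_5\succeq_I 0$. In the alternative branch $a_5\succeq_I\hat{a}_4\succeq_I 0$, the product $\Asterisk_{j\neq 5}(x_5-x_j)^n$ is $\succeq_I 0$ (the $4n$ sign changes cancel and each $(x_j-x_5)^n\succeq_I 0$), so subtracting the nonnegative quantity $(a_5-\hat{a}_4)\ast\Asterisk_{j\neq 5}(x_5-x_j)^n$ from the left-hand side of Eq. (\ref{eqn:gcC5}) only makes it smaller and replaces $a_5$ by $\hat{a}_4$, which satisfies the first branch trivially. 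With this reduction in hand, set $\epsilon=(-1)^n$, $\tilde{a}_i=a_i\ast(x_i-x_5)^n$ for $i\leq 4$, $\tilde{a}_5=a_5\ast(x_4-x_5)^n$, $\tilde{b}_i=\hat{a}_i\ast(x_i-x_5)^n$ for $i\in\{2,4\}$, and $T_i^{(4)}=\Asterisk_{j\leq 4,\,j\neq i}(x_i-x_j)^n$. By commutativity of $\ast$, the left-hand side of Eq. (\ref{eqn:gcC5}) equals $\sum_{i=1}^3\tilde{a}_i\ast T_i^{(4)}+\tilde{a}_4\ast T_4^{(4)}+a_5\ast T_5$.

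The central step is the algebraic identity
\[ \tilde{a}_4\ast T_4^{(4)}+a_5\ast T_5=\tilde{a}_4'\ast T_4^{(4)}+S,\qquad \tilde{a}_4':=-\epsilon(\tilde{b}_4-\tilde{a}_5), \]
with $S=[\tilde{a}_4+\epsilon(\tilde{b}_4-\tilde{a}_5)]\ast T_4^{(4)}+a_5\ast T_5$, which splits the problem into two subclaims. For $S\succeq_I 0$: factor out $(x_4-x_5)^n$ using $T_4^{(4)}=\epsilon W''$ and $T_5=(x_4-x_5)^n\ast W'$, where $W'=\prod_{j<4}(x_j-x_5)^n$ and $W''=\prod_{j<4}(x_j-x_4)^n$ are both $\succeq_I 0$ with $W'\succeq_I W''$ by Lemma \ref{lem:gC}; a direct computation reduces $S\succeq_I 0$ to $(\epsilon a_4+\hat{a}_4-a_5)\ast W''+a_5\ast W'\succeq_I 0$, which follows because $\epsilon a_4+\hat{a}_4\succeq_I 0$ (since $\hat{a}_4\succeq_I\pm a_4$) and the expression therefore dominates $a_5\ast(W'-W'')\succeq_I 0$. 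For the other summand, apply Theorem \ref{thm:gcC} on $x_1,\ldots,x_4$ with coefficients $\tilde{a}_1,\tilde{a}_2,\tilde{a}_3,\tilde{a}_4'$: the upper bound of $\{\pm\tilde{a}_2\}$ is $\tilde{b}_2$, and the upper bound of $\{\pm\tilde{a}_4'\}$ is $\tilde{b}_4-\tilde{a}_5$ (both values $\pm\tilde{a}_4'$ equal $\pm(\tilde{b}_4-\tilde{a}_5)$ since $\epsilon^2=1$); the required chain $\tilde{a}_1\succeq_I\tilde{b}_2\succeq_I\tilde{a}_3\succeq_I\tilde{b}_4-\tilde{a}_5\succeq_I 0$ is verified term-by-term via Lemma \ref{lem:gC}, with the critical link $\tilde{a}_3\succeq_I\tilde{b}_4-\tilde{a}_5$ coming directly from the hypothesis $a_3+a_5\succeq_I\hat{a}_4$.

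The main obstacle is the uniform handling of the sign $\epsilon=(-1)^n$: in Corollary \ref{cor:gc5} one sets $\alpha=\sign(a_4)$ so that $|a_4-\alpha a_5|=|a_4|-a_5$ exactly, but in a general ring no such sign exists. Absorbing $\epsilon$ into the very definition $\tilde{a}_4'=-\epsilon(\tilde{b}_4-\tilde{a}_5)$ circumvents this: the upper-bound condition $\tilde{b}_4-\tilde{a}_5\succeq_I\pm\tilde{a}_4'$ becomes automatic for either parity of $n$, and the same algebraic identity yields $S\succeq_I 0$ uniformly.
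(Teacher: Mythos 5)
Your proof is correct and follows essentially the same route as the paper's: reduce the branch $a_5\succeq_I\hat{a}_4$ to the other branch, absorb the factors $(x_i-x_5)^n$ into the coefficients, split the $i=4$ and $i=5$ terms into a modified four-variable term plus a nonnegative remainder, and invoke Theorem \ref{thm:gcC}. The only difference is bookkeeping: the paper first replaces each even-indexed $a_i$ by its upper bound $\hat{a}_i$ without loss of generality, so its modified coefficient is simply $\tilde{a}_4-\tilde{a}_5$ and no explicit parity sign is needed, whereas you carry $\hat{a}_4$ and $\epsilon=(-1)^n$ through the argument; the substance is identical.
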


\begin{proof}
Without loss of generality, we assume that $\hat{a}_i = a_i$ as the general case follows readily\footnote{We will make this assumption in the subsequent results as well.} and relies on the fact that for even $i$, $\hat{a}_i\ast\Asterisk_{j<i}(x_j-x_i)^n\ast\Asterisk_{j>i}(x_i-x_j)^n\succeq_I a_i\ast\Asterisk_{j\neq i}(x_i-x_j)^n$. If $a_5 \succeq_I a_4$, then $a_5\ast\Asterisk_{j<5}(x_5-x_j)^n$ can be written as $a_4\ast\Asterisk_{j<5}(x_5-x_j)^n + (a_5-a_4)\ast\Asterisk_{j<5}(x_5-x_j)^n$. Since $(a_5-a_4)\ast\Asterisk_{j<5}(x_5-x_j)^n \succeq_0$, this is reduced to the case $a_5=a_4$ which is a special case of we will consider next.
Next consider the case where $a_3\succeq_I a_4-a_5\succeq_I 0$. Let use define the variables $\tilde{a}_i = a_i\ast (x_i-x_5)^n$ for $1\leq i\leq 4$ and $\tilde{a}_5 = a_5\ast (x_4-x_5)^n$. 
Then the hypothesis implies that
$\tilde{a}_1 \succeq_I \tilde{a}_2 \succeq_I \tilde{a}_3 \succeq_I \tilde{a}_4-\tilde{a}_5\succeq_I 0$. 
The left hand side of Eq. (\ref{eqn:gcC5}) can be rewritten as
\begin{align}&\sum_{i=1}^3\tilde{a}_i\ast \Asterisk_{i\neq j,j<5}(x_i-x_j)^n   + (\tilde{a}_4-\tilde{a}_5)\ast\Asterisk_{j=1}^3(x_4-x_j)^n\nonumber\\&+a_5\ast\left(\Asterisk_{j\neq 5}(x_5-x_j)^n+\Asterisk_{j\neq 4}(x_4-x_j)^n\right)\label{eqn:lhsgc5}
\end{align}
Since  $a_5\succeq_I 0$ and $\Asterisk_{j\neq 5}(x_5-x_j)^n\succeq_I -\Asterisk_{j\neq 4}(x_4-x_j)^n$, the last term in Eq. (\ref{eqn:lhsgc5}) $\succeq_I  0$.
By applying Theorem \ref{thm:gcC} to $\sum_{i=1}^3\tilde{a}_i\ast\Asterisk_{i\neq j,j<5}(x_i-x_j)^n   + (\tilde{a}_4-\tilde{a}_5)\ast\Asterisk_{j=1}^3(x_4-x_j)^n$ we see that Eq. (\ref{eqn:gcC5}) holds and the proof is complete.
\end{proof}

It is straightforward to extend the results in Section \ref{sec:67} as well. 
Recall that Lemma \ref{lem:gC} and Lemma \ref{lem:lemma3G} imply that the power functions $x\rightarrow x^n$ for integers $n\geq 0$ are in class ${\cal G}_2$ 
and we have the following analogous results to Theorem \ref{thm:gc6} and Corollary \ref{cor:gc7}.

\begin{theorem} \label{thm:gcC6}
Let $\ituple$ be a partially ordered commutative ring.
 Let $x_i\succeq_I x_{i+1}$ for $1\leq i \leq 5$ be such that $x_1+x_6\succeq_I x_2+x_5\succeq_I x_3+x_4$ and $n\geq 0$ an integer. Suppose $(x_2-x_6)^{-1}\succ_I 0$ exists.
 If there exists an upper bound of $\{a_i, -a_i\}$ denoted as $\hat{a}_i$ for all even $i$, $a_1 \succeq_I \hat{a}_2 \succeq_I a_5 \succeq_I \hat{a}_6\succeq_I 0$ and $a_3 \succeq_I \hat{a}_4 \succeq_I 0$ then
\begin{align}
\sum_{i=1}^6 a_i \ast \Asterisk_{j\neq i} (x_i-x_j)^n \succeq_I 0 \label{eqn:gcC6}
\end{align}
\end{theorem}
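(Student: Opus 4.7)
The plan is to follow the proof of Theorem \ref{thm:gc6} line by line, substituting the ring operation $\ast$ for real multiplication, using Lemma \ref{lem:gC} and Lemma \ref{lem:lemma3G} in place of the $\cG$- and $\cG_2$-conditions, replacing $|a_i|$ by $\hat{a}_i$, and using the hypothesis $(x_2-x_6)^{-1}\succ_I 0$ at the end to ``divide'' by $g_{2,6}$.

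First I would perform the same reduction as in Theorem \ref{thm:gcC5}. Set $\tilde\gamma_i := \Asterisk_{j<i}(x_j-x_i)^n\ast\Asterisk_{j>i}(x_i-x_j)^n\succeq_I 0$, so that $\Asterisk_{j\neq i}(x_i-x_j)^n = (-1)^{n(i-1)}\tilde\gamma_i$. Since $\hat{a}_i\succeq_I\pm a_i$ and $\tilde\gamma_i\succeq_I 0$, each even-index term satisfies $a_i\ast\Asterisk_{j\neq i}(x_i-x_j)^n\succeq_I -\hat{a}_i\ast\tilde\gamma_i$, and after renaming $\hat{a}_i\to a_i$ for even $i$ it suffices to show
\[
a_1\tilde\gamma_1 - a_2\tilde\gamma_2 + a_3\tilde\gamma_3 - a_4\tilde\gamma_4 + a_5\tilde\gamma_5 - a_6\tilde\gamma_6 \succeq_I 0
\]
with all $a_i,\tilde\gamma_i\succeq_I 0$, $a_1\succeq_I a_2\succeq_I a_5\succeq_I a_6$, and $a_3\succeq_I a_4$. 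Writing $g_{i,j}=(x_i-x_j)^n$ for $i<j$, every elementwise comparison used in the proof of Theorem \ref{thm:gc6}---including $\tilde\gamma_1\succeq_I\tilde\gamma_2$, $\tilde\gamma_3\succeq_I\tilde\gamma_4$, $\tilde\gamma_2\succeq_I\tilde\gamma_5$, $g_{1,3}g_{3,6}\succeq_I g_{1,4}g_{4,6}$, $g_{2,3}g_{3,5}\succeq_I g_{2,4}g_{4,5}$, and $g_{1,2}g_{2,6}\succeq_I g_{1,5}g_{5,6}$---carries over via Lemma \ref{lem:gC} applied to $y\mapsto y^n$, together with the sum-conditions $x_1+x_6\succeq_I x_2+x_5\succeq_I x_3+x_4$ and the nonincreasing ordering.

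The heart of the proof mirrors the original. With $\eta := g_{1,5}\ast g_{1,6}\ast g_{2,6}\succeq_I 0$, I would multiply through by $g_{2,6}$ and derive the ring analogues of Eqs. (\ref{eqn:gc6-eq1}) and (\ref{eqn:gc6-eq2}) to obtain
\[
g_{2,6}\ast(a_1\tilde\gamma_1+a_5\tilde\gamma_5-a_2\tilde\gamma_2-a_6\tilde\gamma_6)\succeq_I\eta\ast\delta,
\]
where $\delta := g_{5,6}\ast(a_5 g_{3,5}g_{4,5}-a_6 g_{3,6}g_{4,6})+g_{1,2}\ast(a_1 g_{1,3}g_{1,4}-a_2 g_{2,3}g_{2,4})$. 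To show $\delta\succeq_I 0$ I would split on whether $\mu:=a_5 g_{3,5}g_{4,5}-a_6 g_{3,6}g_{4,6}\succeq_I 0$; in the hard case, $a_5\succeq_I a_6$ gives $\mu\succeq_I a_5\ast(g_{3,5}g_{4,5}-g_{3,6}g_{4,6})$, and then $g_{1,2}\succeq_I g_{5,6}\succeq_I 0$, $g_{3,5}g_{4,5}\preceq_I g_{3,6}g_{4,6}$ (from Lemma \ref{lem:gC}), and $a_5\preceq_I a_1$ reduce $\delta$ to $a_1 g_{1,2}\ast(g_{1,3}g_{1,4}+g_{3,5}g_{4,5}-g_{2,3}g_{2,4}-g_{3,6}g_{4,6})$, which is $\succeq_I 0$ by Lemma \ref{lem:lemma3G} with $x=x_2-x_3$, $z=x_2-x_4$, $w=x_3-x_5$, $y=x_4-x_5$, $t=x_1-x_2$, $u=x_5-x_6$. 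Finally, since $(x_2-x_6)^{-1}\succ_I 0$, Lemma \ref{lem:gC} yields $((x_2-x_6)^{-1})^n\succeq_I 0$; multiplying $g_{2,6}\ast(a_1\tilde\gamma_1+a_5\tilde\gamma_5-a_2\tilde\gamma_2-a_6\tilde\gamma_6)\succeq_I 0$ by this nonnegative inverse gives $a_1\tilde\gamma_1+a_5\tilde\gamma_5-a_2\tilde\gamma_2-a_6\tilde\gamma_6\succeq_I 0$, and combining with the nonnegative $a_3\tilde\gamma_3-a_4\tilde\gamma_4$ finishes the proof.

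The main obstacle is the chain of inequalities around Eqs. (\ref{eqn:gc6-eq1}) and (\ref{eqn:gc6-eq2}): in the scalar setting several sign assertions are inferred by dividing nonpositive products by positive factors, a step unavailable in a general partial order. Each such step in the ring setting must instead be verified by a direct monotonicity comparison---for instance $a_5 g_{3,5}g_{4,5}g_{5,6}\preceq_I a_2 g_{1,2}g_{2,3}g_{2,4}$ has to be proved directly from $g_{5,6}\preceq_I g_{1,2}$, $g_{4,5}\preceq_I g_{2,3}$, $g_{3,5}\preceq_I g_{2,4}$, and $a_5\preceq_I a_2$ via Lemma \ref{lem:gC}---and the hypothesis $(x_2-x_6)^{-1}\succ_I 0$ is what finally allows the analogue of dividing by $g_{2,6}$ at the end.
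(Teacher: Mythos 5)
Your proposal is correct and follows essentially the same route as the paper: transcribe the proof of Theorem \ref{thm:gc6} into the ring setting, replace $|a_i|$ by $\hat{a}_i$, use Lemma \ref{lem:gC} for every pairwise monotonicity comparison (including the ones that the scalar proof gets by dividing by a positive factor), use Lemma \ref{lem:lemma3G} for the final inequality, and invoke $(x_2-x_6)^{-1}\succ_I 0$ to cancel $g_{2,6}$ at the end. The one place you deviate is the treatment of $\mu = a_5\ast g_{3,5}\ast g_{4,5}-a_6\ast g_{3,6}\ast g_{4,6}$. A case split ``on whether $\mu\succeq_I 0$'' is not a valid dichotomy in a partially ordered ring, since $\mu$ may be incomparable to $0$, and the scalar step $0\geq g_{5,6}\mu\geq g_{1,2}\mu$ genuinely requires $\mu\preceq_I 0$. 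The paper sidesteps this by first normalizing $a_6$ to $a_5$ (writing $a_6\ast\gamma_6=a_5\ast\gamma_6+(a_6-a_5)\ast\gamma_6$ with the second term nonnegative), which makes $\mu=a_5\ast(g_{3,5}g_{4,5}-g_{3,6}g_{4,6})\preceq_I 0$ unconditional. Your ``hard case'' chain, however, never actually uses the sign of $\mu$: the ingredients you list, namely $\mu\succeq_I a_5\ast(g_{3,5}g_{4,5}-g_{3,6}g_{4,6})$, $g_{3,5}g_{4,5}-g_{3,6}g_{4,6}\preceq_I 0$, $g_{1,2}\succeq_I g_{5,6}\succeq_I 0$ and $a_1\succeq_I a_5$, already yield $g_{5,6}\ast\mu\succeq_I g_{5,6}\ast a_5\ast(g_{3,5}g_{4,5}-g_{3,6}g_{4,6})\succeq_I g_{1,2}\ast a_1\ast(g_{3,5}g_{4,5}-g_{3,6}g_{4,6})$ with no assumption on $\mu$. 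So you should simply delete the case split and run that chain unconditionally; done that way your argument is, if anything, marginally cleaner than the paper's normalization. Everything else, including verifying $a_2\ast g_{1,2}g_{2,3}g_{2,4}\succeq_I a_5\ast g_{3,5}g_{4,5}g_{5,6}$ factor by factor rather than by division, matches the paper's proof.
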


\begin{proof}
Define  $g_{i,j} = (x_i-x_j)^n$ and $\gamma_i = \Asterisk_{j\neq i}g_{i,j}$. Note that $\gamma_i \succeq_I 0$ if the index $i$ is odd and $\gamma_i \preceq_I 0$ otherwise.  Now the left hand side of Eq. (\ref{eqn:gcC6}) can be written as $\sum_{i=1}^6 a_i\ast\gamma_i$. For simplicity, we write $x\ast y$ as $xy$.
Since $a_6 \gamma_6 = a_5\gamma_6 + (a_6-a_5)\gamma_6$ and $(a_6-a_5)\gamma_6 \succeq_I 0$, without loss of generalization we can assume that $a_5 = a_6$.
It is easy to see that $\gamma_1 +\gamma_2\succeq_I 0$. Consider the 2 terms $\gamma_3$ and $\gamma_4$.
Since $x_1-x_3\succeq_I x_4-x_6$, by setting
$A = x_1-x_3$, $B = x_4-x_6$ and $C = x_3-x_4$ in Lemma \ref{lem:gC}, we see that $g_{1,3}g_{3,6} \succeq_I g_{1,4}g_{4,6}$. 
Similarly by setting
$A = x_2-x_3$, $B = x_4-x_5$ and $C = x_3-x_4$  we see that $x_2-x_3\succeq_I x_4-x_5$ implies $g_{2,3}g_{3,5}\succeq_I g_{2,4}g_{4,5}$. This means that $a_3\gamma_3+a_4\gamma_4 \succeq_I a_3(\gamma_3+\gamma_4) \succeq_I 0$. 

Since $x_1-x_2 \succeq_I x_5-x_6$ by setting $A = x_1-x_2$, $B = x_5-x_6$ and $C = x_2-x_5$ we see that $g_{1,2}g_{2,6}\succeq_I g_{1,5}g_{5,6}$. $x_2-x_4\succeq_I x_3-x_5$ implies that $g_{2,4}\succeq_I g_{3,5}$ and  $x_2-x_3\succeq_I x_4-x_5$ implies that $g_{2,3}\succeq_I g_{4,5}$, i.e. $g_{2,3}g_{2,4} \succeq_I g_{3,5}g_{4,5}$. This shows that $\gamma_2+\gamma_5 \preceq_I 0$.

Next we show that $a_1\gamma_1+a_5\gamma_5 \succeq_I -a_2\gamma_2-a_6\gamma_6$. Let us define $\eta =g_{1,5}g_{1,6}g_{2,6}\succeq_I 0$.
Since $x_2-x_6 \preceq_I x_1-x_5$, this implies that $g_{2,6}\preceq_I g_{1,5}$ and
$g_{2,6}a_5\gamma_5 +g_{1,5}a_2\gamma_2 \preceq_I g_{2,6}(a_5\gamma_5+a_2\gamma_2) \preceq_I g_{2,6}(a_5(\gamma_5+\gamma_2))\preceq_I 0$. 
Since $g_{1,6}\succeq_I g_{2,5}$,
it is straightforward to show that
\begin{align}
0 &\succeq_I g_{2,6}(a_5\gamma_5+a_2\gamma_2) \succeq_I g_{2,6}a_5\gamma_5 +g_{1,5}a_2\gamma_2  \nonumber \\
& = g_{1,5}g_{2,5}g_{2,6}\left(a_5g_{3,5}g_{4,5}g_{5,6}-a_2g_{1,2}g_{2,3}g_{2,4}\right) \succeq_I \eta\left(a_5g_{3,5}g_{4,5}g_{5,6}-a_2g_{1,2}g_{2,3}g_{2,4}\right) 
\label{eqn:gc6-eq1}
\end{align}
Similarly
\begin{align}
&g_{2,6}(a_1\gamma_1+a_6\gamma_6) \succeq_I g_{2,6}a_1\gamma_1+g_{1,5}a_6\gamma_6 \nonumber \\  
& = g_{1,5}g_{1,6}g_{2,6} \left(a_1g_{1,2}g_{1,3}g_{1,4} -a_6g_{3,6}g_{4,6}g_{5,6}\right) =
\eta\left(a_1g_{1,2}g_{1,3}g_{1,4} -a_6g_{3,6}g_{4,6}g_{5,6} \right)
\label{eqn:gc6-eq2}
\end{align}
We add Eq. (\ref{eqn:gc6-eq1}) to Eq. (\ref{eqn:gc6-eq2}) to obtain
\begin{align*} &g_{2,6}\left(a_1\gamma_1+a_5\gamma_5+a_2\gamma_2+a_6\gamma_6\right)
\\ & \succeq_I \eta(a_5g_{3,5}g_{4,5}g_{5,6}-a_2g_{1,2}g_{2,3}g_{2,4}+a_1g_{1,2}g_{1,3}g_{1,4}-a_6g_{3,6}g_{4,6}g_{5,6})
\end{align*}
The right hand side of the equation above can be written as $\eta\delta$ where
\begin{equation}\label{eqn:term1}
\delta  = g_{5,6}\left(a_5g_{3,5}g_{4,5}-a_6g_{3,6}g_{4,6}\right) + g_{1,2}\left(a_1g_{1,3}g_{1,4}-a_2g_{2,3}g_{2,4}\right)
\end{equation}
  Since $a_3\gamma_3+a_4\gamma_4\succeq_I 0$, the conclusion follows if $\delta \succeq_I 0$. Let $\mu = a_5g_{3,5}g_{4,5}-a_6g_{3,6}g_{4,6}$. Note that $g_{1,2}\left(a_1g_{1,3}g_{1,4}-a_2g_{2,3}g_{2,4}\right) \succeq_I 0$. 
Since $a_5=a_6$, this implies that $\mu = a_5(g_{3,5}g_{4,5}-g_{3,6}g_{4,6})\preceq_I 0$. Since $g_{1,2}\succeq_I g_{5,6}$ and $a_1\succeq_I a_5$,
\begin{equation*}
0 \succeq_I g_{5,6}\mu \succeq_I g_{1,2}\mu = a_5g_{1,2}\left(g_{3,5}g_{4,5}-g_{3,6}g_{4,6}\right) \succeq_I a_1g_{1,2}\left(g_{3,5}g_{4,5}-g_{3,6}g_{4,6}\right)
\end{equation*}
Similarly $a_1g_{1,4}g_{1,3}-a_2g_{2,3}g_{2,4}
\succeq_I a_1(g_{1,4}g_{1,3}-g_{2,3}g_{2,4})$. This means that
\begin{align*}
\delta & \succeq_I a_1g_{1,2}(g_{1,3}g_{1,4}+g_{3,5}g_{4,5}-g_{2,3}g_{2,4}-g_{3,6}g_{4,6})
\end{align*}
Lemma \ref{lem:lemma3G} implies that $g_{1,3}g_{1,4}+g_{3,5}g_{4,5}-g_{2,3}g_{2,4}-g_{3,6}g_{4,6}\succeq_I 0$ by choosing $x=x_2-x_3$, $z=x_2-x_4$, $w=x_3-x_5$, $y=x_4-x_5$, $t = x_1-x_2$ and $u = x_5-x_6$ and this implies that $\delta \succeq_I 0$. Therefore $\eta\delta \succeq_I 0$ and thus $g_{2,6}\left(a_1\gamma_1+a_5\gamma_5+a_2\gamma_2+a_6\gamma_6\right)\succeq_I 0$. Since $g_{2,6}^{-1}\succ_I 0$ by hypothesis, this implies $a_1\gamma_1+a_5\gamma_5+a_2\gamma_2+a_6\gamma_6 \succeq_I 0$. Since $a_3\gamma_3+a_4\gamma_4\succeq_I 0$, the proof is complete.
\end{proof}

Note that for the Loewner order $\succ$ and $\ast$ being matrix multiplication, if $x_2\succ x_6$, then $x_2-x_6\succ 0$ is a positive definite matrix whose inverse $(x_2-x_6)^{-1} \succ 0$ is also positive definite.

\begin{theorem} \label{thm:gcC7}
Let $\ituple$ be a partially ordered commutative ring.
 Let $x_i\succeq_I x_{i+1}$ for $1\leq i \leq 6$ be such that $x_1+x_6\succeq_I x_2+x_5\succeq_I x_3+x_4$ and $n\geq 0$ an integer. Suppose $(x_2-x_6)^{-1}\succ_I 0$ exists.
 If there exists an upper bound of $\{a_i, -a_i\}$ denoted as $\hat{a}_i$ for all even $i$, $a_1 \succeq_I \hat{a}_2 \succeq_I  a_5 \succeq_I \hat{a}_6\succeq_I 0$, $a_3\succeq_I \hat{a}_4 \succeq_I 0$, and either $a_5 + a_7 \succeq_I \hat{a}_6\succeq_I a_7\succeq_I 0$ or $a_7\succeq_I \hat{a}_6\succeq_I 0$ then
\begin{align}
\sum_{i=1}^7 a_i \ast \Asterisk_{j\neq i} (x_i-x_j)^n   \succeq_I 0 \label{eqn:gcC7}
\end{align}
\end{theorem}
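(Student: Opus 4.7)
The plan is to mirror the proof of Corollary \ref{cor:gc7}, which deduced the 7-variable inequality from the 6-variable Theorem \ref{thm:gc6}, and to invoke Theorem \ref{thm:gcC6} in its place. The two adaptations required relative to that template are (i) signs and absolute values must be replaced by the upper-bound bookkeeping already introduced in the proofs of Theorem \ref{thm:gcC5} and Theorem \ref{thm:gcC6}, and (ii) whenever a product of the form $\Asterisk_{j\neq i}(x_i-x_j)^n$ must be compared against another, I would do so via Lemma \ref{lem:gC} (and, if needed, Lemma \ref{lem:lemma3G}) using the identity $(x_i-x_j)^n = (-1)^n(x_j-x_i)^n$ valid in any commutative ring.

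First, as in the proof of Theorem \ref{thm:gcC5}, I would assume without loss of generality that $\hat{a}_i = a_i$ for every even $i$. Next, the subcase $a_7 \succeq_I \hat{a}_6$ is reduced to $\hat{a}_6 \succeq_I a_7 \succeq_I 0$ by splitting $a_7\ast\Asterisk_{j<7}(x_7-x_j)^n$ into a piece with coefficient $\hat{a}_6$ and a remainder piece with coefficient $a_7-\hat{a}_6\succeq_I 0$; the remainder piece is $\succeq_I 0$ since Lemma \ref{lem:gC} yields $\Asterisk_{j<7}(x_7-x_j)^n = \Asterisk_{j<7}(x_j-x_7)^n \succeq_I 0$, and subtracting it from the left-hand side of (\ref{eqn:gcC7}) only decreases that expression. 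With this done, set $\tilde{a}_i = a_i \ast (x_i-x_7)^n$ for $1\leq i\leq 6$ and $\tilde{a}_7 = a_7 \ast (x_6-x_7)^n$. Using that $(x_i-x_7)^n$ is monotonic in $i$ by Lemma \ref{lem:gC}, a direct check yields
\begin{equation*}
\tilde{a}_1 \succeq_I \tilde{a}_2 \succeq_I \tilde{a}_5 \succeq_I \tilde{a}_6 - \tilde{a}_7 \succeq_I 0,\qquad \tilde{a}_3 \succeq_I \tilde{a}_4 \succeq_I 0,
\end{equation*}
which is exactly the hypothesis of Theorem \ref{thm:gcC6} applied to $x_1,\dots,x_6$ with coefficients $\tilde{a}_1,\dots,\tilde{a}_5$ and $\tilde{a}_6-\tilde{a}_7$; the invertibility hypothesis $(x_2-x_6)^{-1}\succ_I 0$ is provided directly.

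Decomposing the left-hand side of (\ref{eqn:gcC7}) in the spirit of (\ref{eqn:lhsgc7}) as
\begin{equation*}
\sum_{i=1}^{5}\tilde{a}_i\ast\Asterisk_{j\neq i,\, j<7}(x_i-x_j)^n + (\tilde{a}_6-\tilde{a}_7)\ast\Asterisk_{j=1}^{5}(x_6-x_j)^n + a_7\ast\bigl(\Asterisk_{j\neq 7}(x_7-x_j)^n+\Asterisk_{j\neq 6}(x_6-x_j)^n\bigr),
\end{equation*}
the first two terms together are $\succeq_I 0$ by Theorem \ref{thm:gcC6}, while the last bracket is $\succeq_I 0$ because, after using $(x_6-x_j)^n = (-1)^n(x_j-x_6)^n$ and factoring out the common $(x_6-x_7)^n$, the claim reduces to $\Asterisk_{j<6}(x_j-x_7)^n \succeq_I \Asterisk_{j<6}(x_j-x_6)^n$, which is immediate from Lemma \ref{lem:gC}. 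The main obstacle is the parity bookkeeping: each time one re-orients a product so that all factors lie in the positive cone, the $(-1)^n$ signs must be tracked carefully to ensure that every rewritten inequality remains compatible with the upper-bound conditions on $\hat{a}_i$ and with the positivity $\tilde{a}_6 - \tilde{a}_7 \succeq_I 0$. Once this is in place, the argument is a mechanical translation of Corollary \ref{cor:gc7} with Theorem \ref{thm:gcC6} taking the role of Theorem \ref{thm:gc6}.
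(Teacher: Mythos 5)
Your proposal is correct and follows essentially the same route as the paper's own proof: the same reduction of the case $a_7\succeq_I\hat{a}_6$ by splitting off $(a_7-\hat{a}_6)\ast\gamma_7\succeq_I 0$, the same substitution $\tilde{a}_i=a_i\ast(x_i-x_7)^n$, $\tilde{a}_7=a_7\ast(x_6-x_7)^n$, the same three-part decomposition of the left-hand side, and the same invocation of Theorem \ref{thm:gcC6} on the first two parts. The only difference is that you spell out the parity bookkeeping behind $\gamma_6+\gamma_7\succeq_I 0$, which the paper leaves implicit.
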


\begin{proof}
We define $g_{ij}$ and $\gamma_i$ as in the proof of Theorem \ref{thm:gcC6}.
If $a_7 \succeq_I a_6$, then $a_7\ast \gamma_7$ can be written as $a_6\ast \gamma_7 + (a_7-a_6)\ast \gamma_7$. Since $(a_7-a_6)\ast\gamma_7 \succeq_0$, this is reduced to the case $a_7=a_6$ which is a special case of we will consider next.

Next suppose that $a_5 + a_7 \succeq_I a_6 \succeq_I a_7$.
Let us define the variables $\tilde{a}_i = a_i\ast g_{i,7}$ for $1\leq i\leq 6$ and $\tilde{a}_7 = a_7\ast g_{6,7}$. Then the hypothesis implies that
$\tilde{a}_1 \succeq_I \tilde{a}_2 \succeq_I \tilde{a}_5\succeq_I \tilde{a}_6-\tilde{a}_7\succeq_I 0$, $\tilde{a}_3\succeq_I \tilde{a}_4$. 
The left hand side of Eq. (\ref{eqn:gcC7}) can be rewritten as
\begin{equation}\sum_{i=1}^5\tilde{a}_i\ast \Asterisk_{i\neq j,j<7}g_{i,j}   + (\tilde{a}_6-\tilde{a}_7)\ast \Asterisk_{j=1}^5g_{6,j}
+a_7\ast\left(\gamma_7+\gamma_6\right)
\label{eqn:lhsgc7C}
\end{equation}
Since  $a_7\succeq_I 0$ and $\gamma_7+\gamma_6\succeq_I 0$,
the last term in Eq. (\ref{eqn:lhsgc7C}) is $\succeq_I 0$.
By applying Theorem \ref{thm:gcC6} to $\sum_{i=1}^5\tilde{a}_i\ast\Asterisk_{i\neq j,j<7}g_{i,j}   + (\tilde{a}_6-\tilde{a}_7)\ast\Asterisk_{j=1}^5g_{6,j}$ the proof is complete.
\end{proof}

\section{Conclusions}
Schur's inequality gives conditions under which the sum of products of nontrivial differences among 3 real numbers is nonnegative. We proved several generalizations of Schur's inequality that include multiple variables, more general functions of the differences and products of vectors and matrices. We also show that a Schur's inequality of $2n$ variables leads to a Schur's inequality of $2n+1$ variables.

\end{document}